\definecolor{webgreen}{rgb}{0,.5,0}
\definecolor{webbrown}{rgb}{.6,0,0}
\newtheorem{theorem}{Theorem}
\newtheorem{proposition}{Proposition}
\newtheorem{lemma}{Lemmma}
\newtheorem{conjecture}{Conjecture}
\begin{document}
\begin{center}
{\LARGE\bf An Integral Representation  of Kekul\'{e} \\
 Numbers, and Double Integrals Related \\
 to  Smarandache Sequences }

\vspace{.25in}
\large 
John M. Campbell \\
Toronto, ON \\
\href{mailto:jmaxwellcampbell@gmail.com}{\tt jmaxwellcampbell@gmail.com}
\end{center}

\begin{abstract}

  We present an integral representation of Kekul\'{e} numbers for $P_{2} (n)$ benzenoids.
 Related integrals of the form $\int_{-\pi}^{\pi} \frac{\cos(nx)}{\sin^{2}x +k } dx$ are evaluated.
 Conjectures relating double integrals of the form $\int_{0}^{m}  \int_{-\pi}^{\pi} \frac{\cos (2nx)}{k+\sin^{2}x} dx dk $ to
 Smarandache sequences are presented.

\end{abstract}

\section{Introduction}

  It is well-known that some characteristics of Kekul\'{e} structures may be enumerated in terms of recursive
 sequences \cite{KSBH} \cite{ITBH} \cite{hosoya}  \cite{ATBH} \cite{A122652}. A variety of discrete formulas for $K(B)$
 (the number of Kekul\'{e} structures in a benzenoid system $B$) are known \cite{KSBH}.

  The existence of Kekul\'{e} structures in benzenoid systems is commonly considered to be an algebraic problem.
 A well-known discrete identity follows: letting $A$ denote the adjacency matrix of a benzenoid
 system $B$ with $n$ vertices, 
 $\det A = (-1)^{\frac{n}{2}} K(B)^{2}$ (so a benzenoid system has Kekul\'{e} structures iff the matrix $A$ is
 non-singular) \cite{EKSBS} \cite{ATBH}, a result discovered by Dewar and Longuet-Higgins (1952) and 
 rigorously proven in 1980 by Cvetkovi\'{c} (et al.) \cite{KSBH}.

 Inspired by Fourier analysis, 
we prove an integral equation for Kekul\'{e} numbers for pyrenes on a string with hexagons removed from both sides ($K(P_{2} (n))$), and
  prove an explicit closed form evaluation of $\int_{-\pi}^{\pi} \frac{\cos(nx)}{\sin^{2}x +k } dx$.

  We conjecture how the related expressions $\int_{0}^{m}  \int_{-\pi}^{\pi} \frac{\cos (2nx)}{k+\sin^{2}x} dx dk $ may be used to represent (quadratic) 
continued fractions, and conjecture that double integrals of this form 
correspond to sequences such as the square root of smallest square of the type $k^{2}(k+1)$ and 
the number of solutions to $x^{2} \equiv 0 \mod n$,
 establishing unexpected number theoretic implications. Many of the sequences discussed below may be 
described as  Smarandache sequences, such as core($n$).

\section{Main Problems and Results}

  Given the tetravalency of carbon,  it becomes evident that the number of Kekul\'{e} 
structures of a benzenoid hydrocarbon is equal to the number of linear factors of the benzenoid system. Given the work of
Ohkami and Hosoya, the following recurrence formula for the Kekul\'{e} numbers for pyrenes on a string (i.e. a system of linearly annelated pyrenes, pyrenes
being a characteristic subunit of all-benzenoid systems) of the form $K(P_{2} (n))$ 
holds \cite{KSBH} \cite{A122652}:

\begin{equation}\label{pyrene}
K(P_{n})=10K(P_{n-1})-K(P_{n-2})
\end{equation}

  Consider the companion recursion sequence of \eqref{pyrene} given by 
$d_{n} = 10d_{n-1} - d_{n-2}$; $d_{0} = 1$, $d_{1} = 5$ \cite{A001079}. 
Surprisingly, these recursion sequences may be used to evaluate the following integral, letting $c_{n}=K(P_{n})$:

\begin{lemma}\label{lemmausedinfirsttheorem}

\begin{equation*}
\int_{-\pi}^{\pi} \frac{\cos (nx)}{2+\sin^{2}x} dx = \frac{(-1)^{n}+1}{2}  \pi (-c_{\lfloor \frac{n}{2}  \rfloor} 
 + d_{\lfloor \frac{n}{2}  \rfloor} \sqrt{\frac{2}{3}})
\end{equation*}

\end{lemma}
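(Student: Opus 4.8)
The plan is to reduce the integral to a standard trigonometric moment integral by a half-angle simplification of the denominator, evaluate it through a Fourier (Poisson-kernel) expansion, and then reconcile the resulting closed form with the recurrence sequences $c_n$ and $d_n$ via their Binet representations. First I would rewrite $2+\sin^2 x=\tfrac{5-\cos 2x}{2}$, so that
$$\int_{-\pi}^{\pi}\frac{\cos(nx)}{2+\sin^2 x}\,dx=\int_{-\pi}^{\pi}\frac{2\cos(nx)}{5-\cos 2x}\,dx.$$
The key analytic input is the classical expansion
$$\frac{1}{a-\cos\phi}=\frac{1}{\sqrt{a^2-1}}\Bigl(1+2\sum_{k=1}^{\infty} r^{k}\cos(k\phi)\Bigr),\qquad r=a-\sqrt{a^2-1},\ a>1,$$
which follows from the Poisson-kernel identity $\frac{1-r^2}{1-2r\cos\phi+r^2}=1+2\sum_{k\ge 1}r^k\cos(k\phi)$ together with the relation $r+r^{-1}=2a$. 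Applying this with $\phi=2x$ and $a=5$ (so $\sqrt{a^2-1}=2\sqrt6$ and $r=5-2\sqrt6$) turns the integrand into a cosine series in $x$.

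Since $|r|<1$, the series converges uniformly by the Weierstrass $M$-test, so I would integrate term by term and invoke orthogonality: $\int_{-\pi}^{\pi}\cos(nx)\,dx=0$ for $n\ge 1$, while $\int_{-\pi}^{\pi}\cos(nx)\cos(2kx)\,dx$ equals $\pi$ exactly when $n=2k$ and vanishes otherwise. This single step produces both features of the claimed formula at once: a surviving term exists only when $n$ is even, which is precisely the indicator $\frac{(-1)^n+1}{2}$, and in that case $k=\lfloor n/2\rfloor$ gives
$$\int_{-\pi}^{\pi}\frac{\cos(nx)}{2+\sin^2 x}\,dx=\frac{(-1)^n+1}{2}\cdot\frac{2\pi}{\sqrt6}\bigl(5-2\sqrt6\bigr)^{\lfloor n/2\rfloor}.$$

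The final step is arithmetic. The recurrences for $c_n$ and $d_n$ share the characteristic equation $\lambda^2-10\lambda+1=0$ with roots $5\pm 2\sqrt6$, so $d_m=\tfrac12\bigl[(5+2\sqrt6)^m+(5-2\sqrt6)^m\bigr]$ from $d_0=1,\ d_1=5$, while the Kekul\'e sequence is $c_m=\tfrac{1}{\sqrt6}\bigl[(5+2\sqrt6)^m-(5-2\sqrt6)^m\bigr]$. Using $\sqrt{2/3}=2/\sqrt6$, a direct substitution gives $-c_m+d_m\sqrt{2/3}=\tfrac{2}{\sqrt6}(5-2\sqrt6)^m$, which matches the evaluated integral after multiplying by $\pi$. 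I expect the main obstacle to be not the analysis but this reconciliation: one must pin down the correct initial values of $c_n=K(P_n)$ (here $c_0=0,\ c_1=4$) so that its Binet form carries the factor $1/\sqrt6$ rather than the $1/2$ appearing for $d_n$; without the right normalization the $(5+2\sqrt6)^m$ terms fail to cancel. A parallel route via the residue theorem (substituting $z=e^{i\phi}$ and collecting the pole inside the unit circle at $z=5-2\sqrt6$) would yield the same closed form and serve as an independent check.
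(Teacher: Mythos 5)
Your proof is correct, but it takes a genuinely different route from the paper's. The paper argues by induction on $n$: since $c_n$ and $d_n$ satisfy the same recurrence $u_n=10u_{n-1}-u_{n-2}$, the inductive step reduces to showing $\int_{-\pi}^{\pi}\frac{10\cos(2(n-1)x)-\cos(2(n-2)x)-\cos(2nx)}{2+\sin^2x}\,dx=0$, which holds because $\cos(2nx)+\cos(2(n-2)x)=2\cos(2x)\cos(2(n-1)x)$ and $10-2\cos(2x)=4(2+\sin^2x)$, so the integrand collapses to $4\cos(2(n-1)x)$; the odd-$n$ vanishing is then handled separately by an antisymmetry argument under $x\mapsto x+\frac{\pi}{2}$. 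Your Poisson-kernel expansion of $\frac{1}{5-\cos 2x}$ with $r=5-2\sqrt{6}$ instead evaluates the integral directly by orthogonality, producing the closed form $\frac{2\pi}{\sqrt{6}}(5-2\sqrt{6})^{\lfloor n/2\rfloor}$ and the even/odd dichotomy in a single step; this is cleaner, makes the base cases and the odd case automatic, and in effect already contains the content of the paper's Lemma~\ref{lemma3} and Theorem~\ref{theorem2} (the same computation with $a=2k+1$ handles general $k$). The price is that you must supply the Binet forms, hence the initial values $c_0=0$, $c_1=4$ and $d_0=1$, $d_1=5$; you identify exactly the right normalization (consistent with the cited OEIS entry A122652 for $K(P_n)$), and your reconciliation $-c_m+d_m\sqrt{2/3}=\frac{2}{\sqrt{6}}(5-2\sqrt{6})^m$ checks out. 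One could note that the paper's inductive proof, by contrast, never actually verifies its base cases for this lemma, so your argument is arguably the more complete of the two.
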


\begin{proof}

  First consider the non-zero values of the above equation; use induction.
 Let $P(n)$ be the statement ``$\int_{- \pi}^{\pi} \frac{\cos (2nx)}{2+\sin ^{2} x } dx =  (-c_{n} + d_{n} \sqrt{\frac{2}{3}}) \pi$''.
  Suppose $P(n)$ holds. So we want $P(n-2) \wedge P(n-1)  \Rightarrow P(n)$. Verify that $P(n)$ holds for all $n < m$ for
 some $2<m$.

  We have $K(P_{n})=10K(P_{n-1})-K(P_{n-2})$ \eqref{pyrene} and $d_{n} = 10d_{n-1} - d_{n-2}$; $d_{0} = 1$, $d_{1} = 5$ \cite{A001079}.

  So by the assumption of $P(n)$ we have:

\begin{equation*}
\int_{-\pi}^{\pi} \frac{10\cos (2(n-1)x)}{2+\sin^{2}x} dx =  10 (-c_{n-1}  + d_{n-1} \sqrt{\frac{2}{3}}) \pi
\end{equation*}

\begin{equation*}
\int_{-\pi}^{\pi} \frac{\cos (2(n-2)x)}{2+\sin^{2}x} dx =  (-c_{n-2}  + d_{n-2} \sqrt{\frac{2}{3}}) \pi
\end{equation*}

So it remains to prove that:

\begin{equation}\label{above2}
\int_{-\pi}^{\pi} \frac{10\cos (2(n-1)x)-\cos (2(n-2)x)-\cos (2nx)}{2+\sin^{2}x} dx =  0
\end{equation}

  It is elementary to evalute the above integral as follows:

\begin{equation*}
\int_{-\pi}^{\pi} \frac{10\cos (2(n-1)x)-\cos (2(n-2)x)-\cos (2nx)}{2+\sin^{2}x} dx =  \frac{4 \sin (2n \pi)}{n-1}
\end{equation*}

  So \eqref{above2} holds, letting $n$ be an integer (greater than 1).

For odd values of $n$ for $\int_{-\pi}^{\pi} \frac{\cos (nx)}{2+\sin^{2}x} dx$, interpret the integrand as a shifted skew-symmetric function.
 So it remains to prove that
 $ \frac{\cos (n(x+\frac{\pi}{2}))}{2+\sin^{2}(x+\frac{\pi}{2})} = -\frac{\cos (n(-x+\frac{\pi}{2}))}{2+\sin^{2}(-x+\frac{\pi}{2})}$, which is elementary. 

\end{proof}

\begin{theorem}\label{main}

\begin{equation*}
K(P_{n}) = -\frac{\int_{-\pi}^{\pi}   \frac{\cos(2nx)}{2+\sin^{2}x} dx}{\pi}+(-1)^{n} \cos(2n \sin^{-1} \sqrt{3}) \sqrt{\frac{2}{3}}
\end{equation*}

\end{theorem}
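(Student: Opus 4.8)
The plan is to reduce the claimed identity to the already-established Lemma~\ref{lemmausedinfirsttheorem} and then verify that the apparently transcendental term $(-1)^{n}\cos(2n\sin^{-1}\sqrt{3})\sqrt{2/3}$ exactly reproduces the companion sequence $d_{n}$. First I would invoke the even-index case of Lemma~\ref{lemmausedinfirsttheorem}, namely $\int_{-\pi}^{\pi}\frac{\cos(2nx)}{2+\sin^{2}x}\,dx=(-c_{n}+d_{n}\sqrt{2/3})\pi$, and divide by $-\pi$ to obtain $-\frac{1}{\pi}\int_{-\pi}^{\pi}\frac{\cos(2nx)}{2+\sin^{2}x}\,dx=c_{n}-d_{n}\sqrt{2/3}$. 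Substituting this into the right-hand side of the theorem and recalling $c_{n}=K(P_{n})$, the entire statement collapses to the single arithmetic–trigonometric identity
$$(-1)^{n}\cos\!\big(2n\sin^{-1}\sqrt{3}\big)=d_{n}.$$
Thus the theorem is true precisely when this closed form for $d_{n}$ holds, and the rest of the work is to certify it.

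To prove that identity I would first record a Binet-type formula for $d_{n}$. The recurrence $d_{n}=10d_{n-1}-d_{n-2}$ has characteristic polynomial $t^{2}-10t+1$, with roots $5\pm 2\sqrt{6}$; imposing $d_{0}=1$ and $d_{1}=5$ forces equal coefficients, so $d_{n}=\tfrac{1}{2}\big[(5+2\sqrt{6})^{n}+(5-2\sqrt{6})^{n}\big]$. Next I would evaluate the trigonometric term as a complex quantity, since $\sqrt{3}>1$ makes $\sin^{-1}\sqrt{3}$ non-real. Using $\sin^{-1}(z)=\tfrac{\pi}{2}-i\ln\!\big(z+\sqrt{z^{2}-1}\big)$ with $z=\sqrt{3}$ gives $\sin^{-1}\sqrt{3}=\tfrac{\pi}{2}-i\ln(\sqrt{3}+\sqrt{2})$, whence $2n\sin^{-1}\sqrt{3}=n\pi-2ni\ln(\sqrt{3}+\sqrt{2})$.

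Applying the angle-subtraction formula together with the evenness of cosine then yields $\cos(2n\sin^{-1}\sqrt{3})=(-1)^{n}\cos\!\big(2ni\ln(\sqrt{3}+\sqrt{2})\big)=(-1)^{n}\cosh\!\big(2n\ln(\sqrt{3}+\sqrt{2})\big)$, so the stray factor of $(-1)^{n}$ cancels and I am left to identify $\cosh\!\big(2n\ln(\sqrt{3}+\sqrt{2})\big)$ with $d_{n}$. Expanding the hyperbolic cosine gives $\tfrac{1}{2}\big[(\sqrt{3}+\sqrt{2})^{2n}+(\sqrt{3}+\sqrt{2})^{-2n}\big]$; using $(\sqrt{3}+\sqrt{2})(\sqrt{3}-\sqrt{2})=1$ and $(\sqrt{3}\pm\sqrt{2})^{2}=5\pm 2\sqrt{6}$, this is exactly $\tfrac{1}{2}\big[(5+2\sqrt{6})^{n}+(5-2\sqrt{6})^{n}\big]=d_{n}$, closing the argument. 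The main obstacle I anticipate is interpretive rather than computational: one must commit to reading $\sin^{-1}\sqrt{3}$ as a complex logarithm and track the resulting $(-1)^{n}$ factor carefully, since a sign slip there would destroy the cancellation against the $-d_{n}\sqrt{2/3}$ contributed by the Lemma. Everything downstream is routine algebra with the quadratic surd $5+2\sqrt{6}$.
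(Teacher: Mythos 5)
Your proposal is correct and follows essentially the same route as the paper: both reduce the theorem to the even-index case of Lemma~\ref{lemmausedinfirsttheorem} together with the identity $d_{n}=(-1)^{n}\cos(2n\sin^{-1}\sqrt{3})$. The only difference is that the paper merely cites this closed form for $d_{n}$ from the OEIS entry, whereas you actually prove it via the Binet formula for $t^{2}-10t+1$ and the logarithmic form of $\sin^{-1}\sqrt{3}$, which makes your argument self-contained where the paper's is not.
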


\begin{proof}

  An alternative evaluation of $d_n$, from  Jasinski (2008) \cite{A001079} ,  follows:

\begin{equation*}
d_{n}=(-1)^{n} \cos(2n \sin ^{-1} \sqrt{3})
\end{equation*}

So given Lemma~\ref{lemmausedinfirsttheorem} , Theorem~\ref{main} holds.

\end{proof}

\begin{lemma}\label{lemma2}

 Let $y_{n}$ be defined by the following integer sequence: $y_{n} = (4k+2) y_{n-1} - y_{n-2}$, $y_{0}  = 0$, $y_{1}=4$ for some fixed $k$.
 Let $z_{n}$ be defined by the following integer sequence: $z_{n} = (4k+2) z_{n-1} - z_{n-2}$, $z_{0}  = 1$, $z_{1}=2k+1$ for some fixed $k$.
  Then:

\begin{equation}
\int_{-\pi}^{\pi} \frac{\cos (nx)}{k+\sin^{2}x} dx = \frac{(-1)^{n}+1}{2}  \pi (-y_{\lfloor \frac{n}{2}  \rfloor}
  + z_{\lfloor \frac{n}{2}  \rfloor} \sqrt{\frac{2}{T_{k}}})
\end{equation}

\end{lemma}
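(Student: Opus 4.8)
The plan is to mirror the inductive strategy of Lemma~\ref{lemmausedinfirsttheorem}, now carrying the parameter $k$ throughout. First I would pin down the undefined constant $T_k$: comparing the $k=2$ instance against Lemma~\ref{lemmausedinfirsttheorem}, where $\sqrt{2/3}$ occurs, forces $T_k$ to be the $k$-th triangular number $T_k = \frac{k(k+1)}{2}$. Under this reading $\sqrt{2/T_k} = \frac{2}{\sqrt{k(k+1)}}$, and the statement becomes a faithful generalization of the $k=2$ case, so I will adopt it.

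As in the previous proof, the factor $\frac{(-1)^n+1}{2}$ shows that the odd-index assertion is simply that the integral vanishes. I would dispatch this exactly as before: for odd $n$ the integrand $f(x) = \frac{\cos(nx)}{k+\sin^2 x}$ satisfies $f(x+\pi) = -f(x)$, since $\sin^2 x$ has period $\pi$ while $\cos(n(x+\pi)) = (-1)^n\cos(nx) = -\cos(nx)$; splitting $\int_{-\pi}^{\pi}$ at $0$ and translating one half by $\pi$ then cancels the two pieces. This is the same shifted skew-symmetry invoked at the end of Lemma~\ref{lemmausedinfirsttheorem}.

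The substance is the even case $n = 2m$. Let $Q(m)$ be the statement that $\int_{-\pi}^{\pi}\frac{\cos(2mx)}{k+\sin^2 x}\,dx = \pi\bigl(-y_m + z_m\sqrt{2/T_k}\bigr)$, and argue by strong induction. For the two base cases I would use the standard evaluation $\int_{-\pi}^{\pi}\frac{dx}{k+\sin^2 x} = \frac{2\pi}{\sqrt{k(k+1)}}$ together with the reduction $\cos 2x = (2k+1) - 2(k+\sin^2 x)$; these yield $Q(0)$ from $y_0=0,\ z_0=1$ and $Q(1)$ from $y_1=4,\ z_1=2k+1$, which simultaneously confirms that the stated initial conditions are the correct ones. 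For the inductive step ($m \ge 2$), since $y_m$ and $z_m$ obey the common recurrence $a_m = (4k+2)a_{m-1} - a_{m-2}$, the combination $-y_m + z_m\sqrt{2/T_k}$ does too, so it suffices to prove the cancellation

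\begin{equation*}
\int_{-\pi}^{\pi} \frac{(4k+2)\cos(2(m-1)x) - \cos(2(m-2)x) - \cos(2mx)}{k+\sin^2 x}\,dx = 0 .
\end{equation*}

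Here the key identity enters, and it is the one computation worth carrying out with care. Using $\cos(2mx) + \cos(2(m-2)x) = 2\cos(2(m-1)x)\cos 2x$, the numerator collapses to $2\cos(2(m-1)x)\bigl[(2k+1) - \cos 2x\bigr] = 4\cos(2(m-1)x)(k+\sin^2 x)$, since $(2k+1)-\cos 2x = 2(k+\sin^2 x)$. The denominator cancels outright, leaving $4\int_{-\pi}^{\pi}\cos(2(m-1)x)\,dx$, which vanishes for every integer $m \ge 2$ because then $m-1 \ge 1$. The only genuine subtlety---and the place I expect the argument could go wrong if one is careless---is exactly this restriction: for $m=1$ the reduced integrand is the constant $1$ and the integral is $2\pi \ne 0$, so the cancellation truly fails at $m=1$. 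This is precisely why $m=0$ and $m=1$ must be handled as separate base cases rather than folded into the induction. Combining the odd case, the two base cases, and the inductive cancellation then closes the proof.
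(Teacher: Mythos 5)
Your proof is correct and follows the same overall skeleton as the paper's: strong induction on the even case via the common three-term recurrence, explicit base cases $P(0,k)$ and $P(1,k)$, and a separate symmetry argument for odd $n$. The differences are all in how the elementary steps are discharged, and in each instance your version is more self-contained. First, the paper never defines $T_k$; your identification $T_k=\tfrac{k(k+1)}{2}$ is the right one (it is forced by the paper's own verification of $P(1,k)$, where $\sqrt{2/T_k}$ is replaced by $\sqrt{4/(k(k+1))}$, and by Theorem~\ref{theorem2}). Second, for the key cancellation integral the paper simply asserts the value $\tfrac{4\sin(2n\pi)}{n-1}$ ``which is elementary'' --- an expression that is ill-defined at $n=1$ and clearly machine-generated --- whereas you derive the cancellation from $\cos(2mx)+\cos(2(m-2)x)=2\cos(2(m-1)x)\cos 2x$ and $(2k+1)-\cos 2x=2(k+\sin^2 x)$, which both explains that formula and makes explicit why $m=1$ must be excluded from the inductive step (the very singularity hidden in the paper's $n-1$ denominator). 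Third, your base case $P(1,k)$ via the reduction $\cos 2x=(2k+1)-2(k+\sin^2 x)$ replaces the paper's page of complex-logarithm manipulation ending in $\ln(-1)=-\pi i$. Finally, for odd $n$ you use antiperiodicity $f(x+\pi)=-f(x)$ rather than the paper's skew-symmetry about $x=\tfrac{\pi}{2}$; both are valid. The one input you take for granted, $\int_{-\pi}^{\pi}\frac{dx}{k+\sin^2 x}=\frac{2\pi}{\sqrt{k(k+1)}}$ for $k>0$, is the same standard evaluation the paper also invokes without proof, so nothing essential is missing.
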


\begin{proof}

  First consider the non-zero values of the above equation; use induction.
Let $P(n,k)$ be the statement, for fixed $k$, $\int_{-\pi}^{\pi} \frac{\cos (2nx)}{k+\sin^{2}x} dx =   \pi (-y_{n}  + z_{n} \sqrt{\frac{2}{T_{k}}})$.
We want: $P(n-2,k) \wedge P(n-1,k) \Rightarrow P(n,k)$. 
So by the assumption of $P(n,k)$ we have: 
 
\begin{equation*}
\int_{-\pi}^{\pi} \frac{(4k+2) \cos (2(n-1)x)}{k+\sin^{2}x} dx =  (4k+2) \pi (-y_{n-1}  + z_{n-1} \sqrt{\frac{2}{T_{k}}})
\end{equation*}

  So we must prove that $\int_{-\pi}^{\pi} \frac{(4k+2) \cos (2(n-1)x)  -  \cos (2(n-2)x) - \cos (2nx) }{k+\sin^{2}x} dx =  0$,
 which is elementary:
 
\begin{equation*}
\int_{-\pi}^{\pi} \frac{(4k+2) \cos (2(n-1)x)  -  \cos (2(n-2)x) - \cos (2nx) }{k+\sin^{2}x} dx =  \frac{4 \sin(2n \pi)}{n-1}
\end{equation*}

  By induction we must also prove that $\forall k \in \mathbb{R} (P(0,k) \wedge P(1,k) ) $.

  It is elementary to establish that:

\begin{equation*}\label{elementary1}
\int_{-\pi}^{\pi} \frac{1}{k + \sin ^{2}x } dx =  \frac{2 \sqrt{1 + \frac{1}{k}} \pi}{1 + k}
\end{equation*}

  So $P(0,k)$ holds for all $k$. As for $P(1,k)$, it is elementary to establish that:

\begin{equation*}
\int_{-\pi}^{\pi} \frac{\cos(2x)}{k + \sin ^{2}x } dx =  \frac{(-4 \sqrt{k} \sqrt{  1 + k} \pi + (1 + 2 k) (\pi + 2 i \ln (- \frac{i}{\sqrt{\frac{k}{1 + k}}}  )
 +     i \ln  (\frac{k}{1 + k})      ))}{\sqrt{k} \sqrt{1 + k}}
\end{equation*}

  So we must prove:

\begin{equation*}
\frac{(-4 \sqrt{k} \sqrt{  1 + k} \pi + (1 + 2 k) (\pi + 2 i \ln (- \frac{i}{\sqrt{\frac{k}{1 + k}}}  )
 +     i \ln  (\frac{k}{1 + k})      ))}{\sqrt{k} \sqrt{1 + k}} =  (-4 + (2 k + 1) \sqrt{\frac{4}{k(k+1)}} ) \pi
\end{equation*}

\begin{equation*}
\Leftrightarrow -4 \sqrt{k} \sqrt{  1 + k} \pi + (1 + 2 k) (\pi + 2 i \ln (- \frac{i}{\sqrt{\frac{k}{1 + k}}}  ) +     i \ln  (\frac{k}{1 + k})      ) 
= -4\sqrt{k}\sqrt{k+1}\pi + (4 k + 2)\pi  
\end{equation*}

\begin{equation*}
\Leftrightarrow  (1 + 2 k) (\pi + 2 i \ln (- \frac{i}{\sqrt{\frac{k}{1 + k}}}  ) +     i \ln  (\frac{k}{1 + k})      ) = (4 k + 2)\pi 
   \Leftrightarrow  2 i \ln (- \frac{i}{\sqrt{\frac{k}{1 + k}}}  ) +     i \ln  (\frac{k}{1 + k})      = \pi  
\end{equation*}

\begin{equation*}
\Leftrightarrow  \ln (- \frac{1+k}{ k}  ) +  \ln  (\frac{k}{1 + k})      = - \pi   i  \Leftrightarrow  \ln (- \frac{1+k}{ k}  ) +      \ln  (\frac{k}{1 + k})  
    = - \pi   i \Leftrightarrow  \ln (- 1  )     = - \pi   i 
\end{equation*}

  So the proof by induction is complete.

 Let $f(x,n,k)$ be an elementary periodic function of the form
 $f: \mathbb{Z}^{+} \times \mathbb{R}^{2} \to \mathbb{R}$ 
 given by the expression $\frac{\cos(( 2 n  + 1) x)} {k + \sin^{2} x  }$ for fixed integers $n$ and fixed $k$.

Clearly $f(n,k)$ is symmetric (about the $y$-axis).
 Interpret $f(x,n,k)$ as a shifted skew-symmetric function.
So we want that $f(x + \frac{\pi}{2},n,k) = -f(-x + \frac{\pi}{2},n,k)     $ . So we want:

\begin{equation*}
\frac{\cos(( 2 n  + 1) (x+\frac{\pi}{2}))} {k + \sin^{2} x  } = - \frac{\cos(( 2 n  + 1) (-x+\frac{\pi}{2}))} {k + \sin^{2} x  }
\end{equation*}

\begin{eqnarray*}
\lefteqn{\Leftrightarrow \cos(  ( 2 n  + 1)x  )      \cos (( 2 n  + 1)\frac{\pi}{2}   )   - \sin(  ( 2 n  + 1)x  )      \sin (( 2 n  + 1)\frac{\pi}{2}   )   =  }   \\
  &  &   -      (          \cos( ( 2 n  + 1)\frac{\pi}{2}  ) \cos  (   ( 2 n  + 1)x   )   +  \sin( ( 2 n  + 1)\frac{\pi}{2}  ) \sin  (   ( 2 n  + 1)x   ) )
\end{eqnarray*}

  Recall that $n$ is an integer, so $\cos (( 2 n  + 1)\frac{\pi}{2}   ) =0$. So the proof is complete.

\end{proof}

 Reinterpret $z_{n}$ as a multivariable function given by
 $z_{n,k} = (4k+2) z_{n-1,k} - z_{n-2,k}$; $z_{0,k}  = 1$, $z_{1,k}=2k+1$. Using OEIS one notices that:
$z_{n,1} =  \cosh(2 n \sinh^{-1} 1)$,
$z_{n,2} =  \cosh(2 n  \sinh^{-1}(\sqrt{2}))$,
$z_{n,3} =  \cosh(2 n  \sinh^{-1}(\sqrt{3}))$,
$z_{n,4} =  \cosh(2 n \sinh^{-1}(2))$, ...
  The pattern is clear.

 Reinterpret $y_{n,k}$ as a multivariable function given by
 $y_{n,k} = (4k+2) y_{n-1,k} - y_{n-2,k}$, $y_{0,k}  = 0$, $y_{1,k}=4$ . Using OEIS one notices that:

\begin{equation*}
y_{n,1} = \frac{1}{\sqrt{2}}    (  (3 + 2 \sqrt{2})^n - (3 - 2 \sqrt{2})^n) 
\end{equation*}

\begin{equation*}
y_{n,2}   =   \frac{1}{\sqrt{6}} ( (5 + 2 \sqrt{6})^n  - (5 - 2 \sqrt{6})^n)
\end{equation*}

 Given the above results one deduces that:

\begin{equation*}
y_{n,3}   =   \frac{1}{\sqrt{12}} ( (7 + 2 \sqrt{12})^n  - (7 - 2 \sqrt{12})^n)
\end{equation*}

  Also one may deduce that:

\begin{equation*}
y_{n,4}   =   \frac{1}{\sqrt{20}} ( (9 + 2 \sqrt{20})^n  - (9 - 2 \sqrt{20})^n)
\end{equation*}

  So we conjecture:

\begin{lemma}\label{lemma3}

\begin{equation*}
y_{n,k}   =   \frac{1}{\sqrt{k(k+1)}} ( (2k+1 + 2 \sqrt{k(k+1)})^n  - (2k+1 - 2 \sqrt{k(k+1)})^n)
\end{equation*}

\begin{equation*}
z_{n,k} =  \cosh(2 n  \sinh^{-1}(\sqrt{k}))
\end{equation*}

\end{lemma}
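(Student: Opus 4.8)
The plan is to recognize that both $y_{n,k}$ and $z_{n,k}$ satisfy the same second-order linear recurrence with constant coefficients, namely $a_n = (4k+2)a_{n-1} - a_{n-2}$, and to solve it by the standard characteristic-equation method. The associated characteristic polynomial is $\lambda^2 - (4k+2)\lambda + 1 = 0$, whose discriminant is $(4k+2)^2 - 4 = 16k(k+1)$; since $(2k+1)^2 - 1 = 4k(k+1)$, the two roots are $\lambda_{\pm} = (2k+1) \pm 2\sqrt{k(k+1)}$. These are exactly the bases appearing in the conjectured formula for $y_{n,k}$, which is the first sign that the characteristic-equation route is the correct one. I would record at the outset that $\lambda_+\lambda_- = 1$ (the product of the roots equals the constant term), a fact that streamlines the identification with hyperbolic functions below.

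For the $y_{n,k}$ identity, first I would write the general solution as $y_{n,k} = A\lambda_+^n + B\lambda_-^n$ and impose the initial data. The condition $y_{0,k} = 0$ forces $B = -A$, and $y_{1,k} = 4$ gives $A(\lambda_+ - \lambda_-) = 4$. Since $\lambda_+ - \lambda_- = 4\sqrt{k(k+1)}$, we obtain $A = 1/\sqrt{k(k+1)}$ and $B = -A$, which reproduces the claimed closed form verbatim. This half of the lemma is therefore purely mechanical once the roots are in hand.

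For the $z_{n,k}$ identity I would use the same general solution but with $z_{0,k} = 1$ and $z_{1,k} = 2k+1$, which give $A + B = 1$ and $A\lambda_+ + B\lambda_- = 2k+1 = \tfrac{1}{2}(\lambda_+ + \lambda_-)$, whence $A = B = \tfrac{1}{2}$ and $z_{n,k} = \tfrac{1}{2}(\lambda_+^n + \lambda_-^n)$. It then remains to match this to $\cosh(2n\sinh^{-1}\sqrt{k})$. Setting $\phi = \sinh^{-1}\sqrt{k}$, so that $\sinh\phi = \sqrt{k}$ and $\cosh\phi = \sqrt{k+1}$, the double-angle identities give $\cosh(2\phi) = 2\cosh^2\phi - 1 = 2k+1$ and $\sinh(2\phi) = 2\sinh\phi\cosh\phi = 2\sqrt{k(k+1)}$. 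Hence with $\theta = 2\phi$ one has $e^{\theta} = \cosh(2\phi) + \sinh(2\phi) = \lambda_+$ and $e^{-\theta} = \lambda_-$, so that $\tfrac{1}{2}(\lambda_+^n + \lambda_-^n) = \cosh(n\theta) = \cosh(2n\sinh^{-1}\sqrt{k})$, as claimed. Equivalently, one could verify directly that $w_n = \cosh(n\theta)$ satisfies the recurrence via $\cosh((n+1)\theta) + \cosh((n-1)\theta) = 2\cosh(\theta)\cosh(n\theta)$ together with $2\cosh\theta = 4k+2$, and then check the two initial values.

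The computations here are all elementary, so there is no serious obstacle; the only genuinely substantive step is the hyperbolic identification in the last paragraph, namely verifying that $\sinh^{-1}\sqrt{k}$ produces precisely the characteristic roots through the double-angle formulas. Once $\cosh(2\sinh^{-1}\sqrt{k}) = 2k+1$ and $\sinh(2\sinh^{-1}\sqrt{k}) = 2\sqrt{k(k+1)}$ are established, both closed forms follow immediately, and the conjectural status of the lemma is fully resolved.
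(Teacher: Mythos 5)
Your proof is correct, and it takes a genuinely different route from the paper. The paper proceeds by induction: it plugs the two proposed closed forms into the recurrence $a_n=(4k+2)a_{n-1}-a_{n-2}$, checks the initial values $n=0,1$, and disposes of the inductive step with ``simplify and the equation holds'' (and, for the $\cosh$ formula, ``expand and simplify''). You instead \emph{derive} the closed forms by the characteristic-equation method: the roots $\lambda_{\pm}=(2k+1)\pm 2\sqrt{k(k+1)}$ of $\lambda^2-(4k+2)\lambda+1=0$ give the general solution $A\lambda_+^n+B\lambda_-^n$, and the initial data pin down $A,B$ in each case; the hyperbolic form then falls out of $e^{2\sinh^{-1}\sqrt{k}}=2k+1+2\sqrt{k(k+1)}=\lambda_+$ together with $\lambda_+\lambda_-=1$. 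Your approach buys two things. First, it explains where the formulas come from --- the paper arrives at them by pattern-matching against OEIS entries for $k=1,2,3,4$ and explicitly labels the general statement a conjecture before proving it, whereas your derivation produces them from the recurrence itself. Second, the step the paper leaves as ``simplify'' is automatic for you: $\lambda_{\pm}^n=(4k+2)\lambda_{\pm}^{n-1}-\lambda_{\pm}^{n-2}$ holds by construction of the roots, and the $\cosh$ addition formula $\cosh((n+1)\theta)+\cosh((n-1)\theta)=2\cosh\theta\cosh(n\theta)$ with $2\cosh\theta=4k+2$ gives the same conclusion for $z_{n,k}$ without any expansion. The only implicit hypothesis in your argument is $\lambda_+\neq\lambda_-$, i.e.\ $k(k+1)\neq 0$, but this is already forced by the $\sqrt{k(k+1)}$ in the denominator of the statement, so nothing is lost.
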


\begin{proof}

  We have 
 $y_{n,k} = (4k+2) y_{n-1,k} - y_{n-2,k}$, $y_{0,k}  = 0$, $y_{1,k}=4$, 
  and
 $z_{n,k} = (4k+2) z_{n-1,k} - z_{n-2,k}$; $z_{0,k}  = 1$, $z_{1,k}=2k+1$.

  Let $P(n,k)$ represent the statement `` $y_{n,k} = \frac{1}{\sqrt{k(k+1)}} ( (2k+1 + 2 \sqrt{k(k+1)})^n  - (2k+1 - 2 \sqrt{k(k+1)})^n)$'', for some
 fixed $k$. Verify that $P(0,k)$ and $P(1,k)$ hold:

\begin{equation*}
y_{0,k}   =   \frac{1}{\sqrt{k(k+1)}} ( (2k+1 + 2 \sqrt{k(k+1)})^0  - (2k+1 - 2 \sqrt{k(k+1)})^0)
\end{equation*}

\begin{equation*}
y_{1,k}   =   \frac{1}{\sqrt{k(k+1)}} ( (2k+1 + 2 \sqrt{k(k+1)})^1  - (2k+1 - 2 \sqrt{k(k+1)})^1)
\end{equation*}

  So by the assumption of $P(n,k)$ we want $P(n-2,k) \wedge P(n-1,k) \Rightarrow P(n,k) $, and have:

\begin{equation*}
(4k+2) y_{n-1,k}   = (4k+2)  \frac{1}{\sqrt{k(k+1)}} ( (2k+1 + 2 \sqrt{k(k+1)})^{n-1}  - (2k+1 - 2 \sqrt{k(k+1)})^{n-1})
\end{equation*}

\begin{equation*}
y_{n-2,k}   =   \frac{1}{\sqrt{k(k+1)}} ( (2k+1 + 2 \sqrt{k(k+1)})^{n-2}  - (2k+1 - 2 \sqrt{k(k+1)})^{n-2} )
\end{equation*}

 So it remains to prove that:

\begin{eqnarray*}
\lefteqn{\frac{1}{\sqrt{k(k+1)}} ( (2k+1 + 2 \sqrt{k(k+1)})^n  - (2k+1 - 2 \sqrt{k(k+1)})^n)   =  }    \\
  &  &   (4k+2)  \frac{1}{\sqrt{k(k+1)}} ( (2k+1 + 2 \sqrt{k(k+1)})^{n-1}  - (2k+1 - 2 \sqrt{k(k+1)})^{n-1})    \\
  &  &   -  \frac{1}{\sqrt{k(k+1)}} ( (2k+1 + 2 \sqrt{k(k+1)})^{n-2}  - (2k+1 - 2 \sqrt{k(k+1)})^{n-2} )
\end{eqnarray*}

\begin{eqnarray*}
\lefteqn{ \Leftrightarrow   (2k+1 + 2 \sqrt{k(k+1)})^n  - (2k+1 - 2 \sqrt{k(k+1)})^n   =  }    \\
  &  &   (4k+2)  ( (2k+1 + 2 \sqrt{k(k+1)})^{n-1}  - (2k+1 - 2 \sqrt{k(k+1)})^{n-1})    \\
  &  &   -  ( (2k+1 + 2 \sqrt{k(k+1)})^{n-2}  - (2k+1 - 2 \sqrt{k(k+1)})^{n-2} )
\end{eqnarray*}

 Simplify the right-hand side expression and the above equation holds.

  As for $z_{n,k}$, we have:
 $z_{n,k} = (4k+2) z_{n-1,k} - z_{n-2,k}$; $z_{0,k}  = 1$, $z_{1,k}=2k+1$, and we want
  $z_{0,k}  = 1$ and $z_{1,k}=2k+1$:

\begin{equation*}
z_{0,k} =  \cosh(2 \times 0  \sinh^{-1}(\sqrt{k}))
\end{equation*}

\begin{equation*}
z_{1,k} =  \cosh(2  \sinh^{-1}(\sqrt{k}))
\end{equation*}

  Expand and simplify, and the above holds.

\begin{equation*}
(4k+2)z_{n-1,k} = (4k+2) \cosh(2 (n-1)  \sinh^{-1}(\sqrt{k}))
\end{equation*}

\begin{equation*}
z_{n-2,k} =  \cosh(2 (n-2)  \sinh^{-1}(\sqrt{k}))
\end{equation*}

 So it remains to prove that:

\begin{eqnarray*}
\lefteqn{ \cosh(2 n  \sinh^{-1}(\sqrt{k}))=(4k+2) \cosh(2 (n-1)  \sinh^{-1}(\sqrt{k}))-\cosh(2 (n-2)  \sinh^{-1}(\sqrt{k}))  =  }    \\
  &  &   (4k+2) \cosh( 2n\sinh^{-1}(\sqrt{k})-2\sinh^{-1}(\sqrt{k})  )-\cosh(  2n\sinh^{-1}(\sqrt{k})-4\sinh^{-1}(\sqrt{k}) )
\end{eqnarray*}

  Expand and simplify, and the proof holds.

\end{proof}

\begin{theorem}\label{theorem2}

\begin{eqnarray*}
\lefteqn{ \int_{-\pi}^{\pi} \frac{\cos (nx)}{k+\sin^{2}x} dx = }    \\
  &  &   \frac{(-1)^{n}+1}{2}  \pi (-\frac{(2k+1 + 2 \sqrt{k(k+1)})^{\lfloor \frac{n}{2}  \rfloor} 
 - (2k+1 - 2 \sqrt{k(k+1)})^{\lfloor \frac{n}{2}  \rfloor}}{\sqrt{k(k+1)}}   \\
  &  &   +  \cosh(2 \lfloor \frac{n}{2}  \rfloor  \sinh^{-1}(\sqrt{k})) \frac{2}{\sqrt{k(k+1)}})
\end{eqnarray*}

\end{theorem}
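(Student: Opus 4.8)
The plan is to derive Theorem~\ref{theorem2} directly from Lemma~\ref{lemma2} by substituting the explicit closed forms established in Lemma~\ref{lemma3}. Lemma~\ref{lemma2} already casts the integral in the shape
\begin{equation*}
\int_{-\pi}^{\pi} \frac{\cos (nx)}{k+\sin^{2}x}\, dx = \frac{(-1)^{n}+1}{2}\,\pi\left(-y_{\lfloor n/2 \rfloor} + z_{\lfloor n/2 \rfloor}\sqrt{\frac{2}{T_{k}}}\right),
\end{equation*}
so the whole task reduces to eliminating the auxiliary sequences $y_{n,k}$ and $z_{n,k}$ in favour of their explicit expressions.

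First I would replace $y_{\lfloor n/2 \rfloor}$ by the closed form from Lemma~\ref{lemma3}, namely $\frac{1}{\sqrt{k(k+1)}}\bigl((2k+1+2\sqrt{k(k+1)})^{\lfloor n/2\rfloor} - (2k+1-2\sqrt{k(k+1)})^{\lfloor n/2\rfloor}\bigr)$, which reproduces the first term inside the parentheses of the target identity verbatim. Next I would replace $z_{\lfloor n/2\rfloor}$ by $\cosh\bigl(2\lfloor n/2\rfloor \sinh^{-1}(\sqrt{k})\bigr)$, again invoking Lemma~\ref{lemma3}. Both substitutions are literal, so no algebraic manipulation of the recurrences is needed beyond what Lemma~\ref{lemma3} already supplies.

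The one point deserving care is reconciling the factor $\sqrt{2/T_{k}}$ appearing in Lemma~\ref{lemma2} with the factor $2/\sqrt{k(k+1)}$ appearing in the theorem. Here $T_{k}$ denotes the $k$-th triangular number $T_{k} = \frac{k(k+1)}{2}$, whence $\sqrt{2/T_{k}} = \sqrt{4/(k(k+1))} = 2/\sqrt{k(k+1)}$. With this identification the $z$-term becomes $\cosh\bigl(2\lfloor n/2\rfloor\sinh^{-1}(\sqrt{k})\bigr)\cdot \frac{2}{\sqrt{k(k+1)}}$, which matches the second term of the claimed formula exactly. Since each substitution is direct and the coefficient reconciliation is elementary, there is no real obstacle: the only nontrivial ingredients are Lemma~\ref{lemma2} and Lemma~\ref{lemma3}, and once those are granted the theorem follows immediately by combining them.
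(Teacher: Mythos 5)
Your proposal is correct and takes essentially the same route as the paper, whose proof of Theorem~\ref{theorem2} is simply the one-line observation that it follows from Lemma~\ref{lemma2} and Lemma~\ref{lemma3}. Your explicit reconciliation of $\sqrt{2/T_{k}}$ with $2/\sqrt{k(k+1)}$ (identifying $T_{k}$ as the triangular number $k(k+1)/2$, consistent with the $P(1,k)$ computation inside the proof of Lemma~\ref{lemma2}) is a detail the paper leaves implicit, and is a worthwhile addition.
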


\begin{proof}

  The above theorem follows from Lemma~\ref{lemma2} and Lemma~\ref{lemma3}.

\end{proof}

  Note that $k$ is not restricted to integers. Integrating the above with respect to $k$ yields strange results, often
related to Smarandache sequences.

 Consider the following function for even integers $n$:

\begin{equation*}
f(n) =  \frac{1}{48}    (-5+i^{n})(2+i^{n})n \int_{0}^{1} \int_{-\pi}^{\pi} \frac{\cos(nx)}{t+\sin^{2}x} dx dt
\end{equation*}

  Consider:

\begin{equation*}
f(10)  = -41 (-41 + 29 \sqrt{2}) \pi
\end{equation*}

\begin{equation*}
f(12)  = -140 (-140 + 99 \sqrt{2}) \pi
\end{equation*}

\begin{equation*}
f(14)  = -239 (-239 + 169 \sqrt{2}) \pi
\end{equation*}

\begin{equation*}
f(16)  = -816 (-816 + 577 \sqrt{2}) \pi
\end{equation*}

\begin{equation*}
f(18)  = -1393 (-1393 + 985 \sqrt{2}) \pi
\end{equation*}

  Remarkably the sequence ..., 41, 140, 239, 816, 1393, ... corresponds to the numerators of the lower principal and intermediate convergents to
 $\sqrt{2}$ \cite{A143608}, and the sequence ...,   29, 99, 169, 577, 985 , ... corresponds to the denominators of the lower principal and
 intermediate convergents to
 $\sqrt{2}$ \cite{A079496}.

\begin{conjecture}

 Let $A_{n}$ represent the numerator of the $n$th convergent to 
 $\sqrt{2}$ and let $P_{n}$ represent the $n$th Pell number, i.e. the denominator of the $n$th convergent to
 $\sqrt{2}$. 

 If $\frac{n}{2}$ is even then:

\begin{equation*}
\frac{1}{48}    (-5+i^{n})(2+i^{n})n \int_{0}^{1} \int_{-\pi}^{\pi} \frac{\cos(nx)}{t+\sin^{2}x} dx dt =   2 P_{n/2} (-2 P_{n/2} + A_{n/2} \sqrt{2}) \pi
\end{equation*}

 If $\frac{n}{2}$ is odd then:

\begin{equation*}
\frac{1}{48}    (-5+i^{n})(2+i^{n})n \int_{0}^{1} \int_{-\pi}^{\pi} \frac{\cos(nx)}{t+\sin^{2}x} dx dt =   A_{n/2} (-A_{n/2} + P_{n/2} \sqrt{2}) \pi
\end{equation*}

\end{conjecture}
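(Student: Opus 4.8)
The plan is to remove the inner integral with Theorem~\ref{theorem2} and then collapse the resulting two-term bracket into a single exponential via a hyperbolic substitution, after which the $t$-integration becomes elementary. Throughout I would write $m = n/2$, so that for even $n$ the factor $\tfrac{(-1)^n+1}{2}$ equals $1$ and, by Lemma~\ref{lemma2} and Lemma~\ref{lemma3},
\[
\int_{-\pi}^{\pi} \frac{\cos(nx)}{t+\sin^{2}x}\,dx = \pi\left(-\,y_{m,t} + \frac{2\,z_{m,t}}{\sqrt{t(t+1)}}\right).
\]
First I would set $\theta = \sinh^{-1}\sqrt{t}$, so that $2t+1 = \cosh 2\theta$ and $2\sqrt{t(t+1)} = \sinh 2\theta$, giving $2t+1 \pm 2\sqrt{t(t+1)} = e^{\pm 2\theta}$. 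Then $y_{m,t} = 2\sinh(2m\theta)/\sqrt{t(t+1)}$ and $z_{m,t} = \cosh(2m\theta)$, and the bracket telescopes:
\[
-\,y_{m,t} + \frac{2\,z_{m,t}}{\sqrt{t(t+1)}} = \frac{2\bigl(\cosh(2m\theta) - \sinh(2m\theta)\bigr)}{\sqrt{t(t+1)}} = \frac{2\,e^{-2m\theta}}{\sqrt{t(t+1)}}.
\]

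Next I would carry out the $t$-integration in the variable $\theta$. Under $\theta = \sinh^{-1}\sqrt{t}$ one has $\sqrt{t(t+1)} = \sinh\theta\cosh\theta$ and $dt = \sinh(2\theta)\,d\theta$, so $dt/\sqrt{t(t+1)} = 2\,d\theta$, while the limits $t=0,1$ become $\theta = 0,\ \ln(1+\sqrt{2})$. Hence
\[
\int_{0}^{1}\!\!\int_{-\pi}^{\pi} \frac{\cos(nx)}{t+\sin^{2}x}\,dx\,dt = 4\pi\!\int_{0}^{\ln(1+\sqrt{2})}\!\! e^{-2m\theta}\,d\theta = \frac{2\pi}{m}\bigl(1 - (\sqrt{2}-1)^{2m}\bigr),
\]
using $(1+\sqrt{2})^{-1} = \sqrt{2}-1$. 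This single clean evaluation is the heart of the argument.

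It then remains to insert the prefactor and translate into Pell data. Since $n$ is even, $i^{n} = (-1)^{m}$, so $(-5+i^{n})(2+i^{n})n$ equals $-12n = -24m$ when $m$ is even and $-6n = -12m$ when $m$ is odd; dividing by $48$ gives the scalar $-m/2$ or $-m/4$, and multiplying by the integral cancels the $m$ in the denominator, leaving $-\pi(1-(\sqrt2-1)^{2m})$ or $-\tfrac{\pi}{2}(1-(\sqrt2-1)^{2m})$. To match the stated right-hand sides I would use the Binet forms $A_m = \tfrac12(\alpha^m+\beta^m)$ and $P_m = \tfrac{1}{2\sqrt2}(\alpha^m-\beta^m)$ with $\alpha = 1+\sqrt2$, $\beta = 1-\sqrt2$, noting $\alpha\beta = -1$ and $(\sqrt2-1)^{2m} = \beta^{2m}$. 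A short computation gives $-2P_m + A_m\sqrt2 = \sqrt2\,\beta^m$ and $-A_m + P_m\sqrt2 = -\beta^m$, whence $2P_m(-2P_m+A_m\sqrt2) = (-1)^m - \beta^{2m}$ and $A_m(-A_m+P_m\sqrt2) = -\tfrac12\bigl((-1)^m+\beta^{2m}\bigr)$; the parity of $m$ then selects exactly one of these to agree with $\pm\bigl(1-(\sqrt2-1)^{2m}\bigr)$.

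The hard part will not be the calculus but the bookkeeping of signs and parity. The telescoping of the two-term bracket into the single decaying exponential $e^{-2m\theta}$ is the decisive and robust simplification; once it is in hand the double integral is elementary. The delicate point is carrying the sign of $\alpha\beta = -1$ together with the value $i^{n} = (-1)^{n/2}$ consistently through both parity cases, since these are precisely what determine whether the numerator $A_{n/2}$ or twice the Pell denominator $P_{n/2}$ appears, and they govern the overall sign — the same sign displayed explicitly in the worked examples $f(10),\dots,f(18)$, against which the final closed form must be reconciled.
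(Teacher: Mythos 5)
The paper offers no proof of this statement: it is presented purely as a conjecture supported by the tabulated values $f(10),\dots,f(18)$, so there is nothing to compare your argument against --- you have gone beyond the paper. Your derivation is correct and, conditional on the paper's Lemma~\ref{lemma2} and Lemma~\ref{lemma3}, it settles the conjecture. The decisive step checks out: writing $\theta=\sinh^{-1}\sqrt{t}$ so that $2t+1\pm 2\sqrt{t(t+1)}=e^{\pm 2\theta}$ collapses the bracket to $2e^{-2m\theta}/\sqrt{t(t+1)}$, and since $dt/\sqrt{t(t+1)}=2\,d\theta$ the double integral becomes
\begin{equation*}
\int_{0}^{1}\int_{-\pi}^{\pi}\frac{\cos(2mx)}{t+\sin^{2}x}\,dx\,dt=\frac{2\pi}{m}\bigl(1-(\sqrt{2}-1)^{2m}\bigr),
\end{equation*}
which I have verified against the paper's values at $n=10,16,18$. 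Your prefactor computation ($-m/2$ for $m$ even, $-m/4$ for $m$ odd) and your Binet-form identities $2P_{m}(-2P_{m}+A_{m}\sqrt{2})=(-1)^{m}-\beta^{2m}$ and $A_{m}(-A_{m}+P_{m}\sqrt{2})=-\tfrac12\bigl((-1)^{m}+\beta^{2m}\bigr)$ are also correct.

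The one point you leave dangling --- the ``reconciliation'' of the overall sign --- should be stated outright: your computation shows the left-hand side equals $-\pi\bigl(1-(\sqrt2-1)^{2m}\bigr)$ for $m$ even and $-\tfrac{\pi}{2}\bigl(1-(\sqrt2-1)^{2m}\bigr)$ for $m$ odd, whereas the conjecture's right-hand sides evaluate to $+\bigl(1-(\sqrt2-1)^{2m}\bigr)\pi$ and $+\tfrac12\bigl(1-(\sqrt2-1)^{2m}\bigr)\pi$ respectively. So the conjecture as printed is false by an overall sign; the corrected right-hand sides are $2P_{n/2}(2P_{n/2}-A_{n/2}\sqrt{2})\pi$ and $A_{n/2}(A_{n/2}-P_{n/2}\sqrt{2})\pi$. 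This is not a defect of your argument --- the paper's own data table already contradicts its stated conjecture (e.g.\ $f(12)=-140(-140+99\sqrt2)\pi$ versus the claimed $+140(-140+99\sqrt2)\pi$) --- but you should say so explicitly rather than gesturing at a reconciliation. Also note your proof covers $m\ge 1$ only (the $t$-integration divides by $m$), which is harmless since the hypothesis requires $n/2$ to have a definite parity with $n$ positive, but worth a sentence.
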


  (Note that the double integral $\int_{-1}^{1} \int_{-\pi}^{\pi} \frac{\cos(nx)}{t+\sin^{2}x} dx dt$ displays similar 
behaviour.)

  Moreover,

\begin{conjecture}\label{conjecture2}

  For all integers $m$, and for all even integers $n$, the expression

\begin{equation*}
\int_{0}^{m} \int_{-\pi}^{\pi} \frac{\cos(nx)}{t+\sin^{2}x} dx dt
\end{equation*}

is equal to a number of the form $a(b+c\sqrt{d_{m}})\pi$, where $d_{m}$
is equal to the $m$th smallest $k$ such that $k^{2}(k+1)$ is a square, 
 and $a$, $b$, and $c$ are rational.

\end{conjecture}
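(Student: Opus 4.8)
The plan is to collapse the double integral to a one–dimensional integral that evaluates in closed form, and then to read off the algebraic shape of the answer by a binomial expansion. First I would specialise Theorem~\ref{theorem2} to even $n$, writing $n=2N$, so that $\lfloor n/2\rfloor=N$ and the prefactor $\tfrac{(-1)^n+1}{2}$ equals $1$. The two summands of Theorem~\ref{theorem2} can then be merged using $2t+1\pm 2\sqrt{t(t+1)}=e^{\pm 2\sinh^{-1}\sqrt{t}}$ together with $\cosh\phi-\sinh\phi=e^{-\phi}$, which turns the inner integral into the single term
\[
\int_{-\pi}^{\pi}\frac{\cos(2Nx)}{t+\sin^2 x}\,dx=\frac{2\pi}{\sqrt{t(t+1)}}\,e^{-2N\sinh^{-1}\sqrt{t}}=\frac{2\pi}{\sqrt{t(t+1)}}\bigl(\sqrt{t+1}-\sqrt{t}\bigr)^{2N}.
\]
This already exhibits the factor $\sqrt{t(t+1)}$ that will govern the arithmetic of the final answer.

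Next I would integrate in $t$ by the substitution $\Theta=\sinh^{-1}\sqrt{t}$, i.e.\ $t=\sinh^2\Theta$. Since $\sqrt{t(t+1)}=\sinh\Theta\cosh\Theta$ and $dt=2\sinh\Theta\cosh\Theta\,d\Theta$, the awkward factor cancels and $dt/\sqrt{t(t+1)}=2\,d\Theta$. The outer integral then becomes elementary,
\[
\int_0^m\!\!\int_{-\pi}^{\pi}\frac{\cos(2Nx)}{t+\sin^2 x}\,dx\,dt=4\pi\int_0^{\Theta_m}e^{-2N\Theta}\,d\Theta=\frac{2\pi}{N}\Bigl(1-e^{-2N\Theta_m}\Bigr),
\]
where $\Theta_m=\sinh^{-1}\sqrt{m}$. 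Using $e^{-\Theta_m}=\sqrt{m+1}-\sqrt{m}$ and $N=n/2$ yields the clean closed form
\[
\int_0^m\!\!\int_{-\pi}^{\pi}\frac{\cos(nx)}{t+\sin^2 x}\,dx\,dt=\frac{4\pi}{n}\Bigl(1-(\sqrt{m+1}-\sqrt{m})^{\,n}\Bigr).
\]
I would note in passing that the apparent singularity at $t=0$ is harmless, as the inner integral grows only like $2\pi/\sqrt{t}$ there, which is integrable.

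It then remains to read off the algebraic form. Because $n$ is even, the binomial expansion of $(\sqrt{m+1}-\sqrt{m})^{n}$ splits into its even- and odd-index terms as $P-Q\sqrt{m(m+1)}$ with $P,Q\in\mathbb{Z}$, since every odd-index term carries exactly one factor $\sqrt{m}\,\sqrt{m+1}$ while every even-index term is rational. Hence the double integral equals $\tfrac{4}{n}\bigl((1-P)+Q\sqrt{m(m+1)}\bigr)\pi$, an element of $\mathbb{Q}(\sqrt{m(m+1)})$ times $\pi$. Writing $m(m+1)=g^2 d_m$ with $d_m$ squarefree gives $\sqrt{m(m+1)}=g\sqrt{d_m}$, and therefore the required form $a(b+c\sqrt{d_m})\pi$ with $a=4/n$, $b=1-P$, $c=gQ$ all rational.

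The main obstacle is the number-theoretic identification of $d_m$. The computation above forces $d_m$ to be the squarefree part (the core) of the oblong number $m(m+1)$, so the remaining content of the conjecture is to prove that this sequence coincides with the arithmetic characterisation given in the statement. This is the delicate part: one must match $\mathrm{core}(m(m+1))$ against the proposed sequence term by term, settling the correct offset and normalisation, and I expect this identification to be the genuinely hard (and essentially the only conjectural) step, the analytic evaluation above being rigorous.
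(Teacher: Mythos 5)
The paper offers no proof of this statement: it is explicitly left as an open conjecture, supported only by the table of values for $n=4$. Your argument, by contrast, amounts to an essentially complete proof (conditional on Theorem~\ref{theorem2}, which the paper does prove), and it is correct. Specialising Theorem~\ref{theorem2} to $n=2N$ and merging the two terms via $2t+1\pm 2\sqrt{t(t+1)}=e^{\pm 2\sinh^{-1}\sqrt{t}}$ does collapse the inner integral to $2\pi(\sqrt{t+1}-\sqrt{t})^{2N}/\sqrt{t(t+1)}$, the substitution $t=\sinh^{2}\Theta$ is the right one, and the resulting closed form
\[
\int_{0}^{m}\int_{-\pi}^{\pi}\frac{\cos(nx)}{t+\sin^{2}x}\,dx\,dt=\frac{4\pi}{n}\Bigl(1-(\sqrt{m+1}-\sqrt{m})^{\,n}\Bigr)
\]
reproduces every entry in the paper's tables (for instance $n=4$, $m=1$ gives $\pi\bigl(1-(17-12\sqrt{2})\bigr)=4(-4+3\sqrt{2})\pi$, and $n=2$, $m=8$ gives $8(-4+3\sqrt{2})\pi$, both as listed). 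The binomial parity argument then correctly places the value in $\mathbb{Q}\bigl(\sqrt{m(m+1)}\bigr)\pi$. One remark on your closing paragraph: the identification you defer as the genuinely hard step is in fact immediate. The sequence the paper actually cites, \cite{A083481} $(2,6,3,5,30,42,\dots)$, is defined as the smallest $k$ for which $m(m+1)k$ is a square, and that is by definition the squarefree part of $m(m+1)$ that your computation produces; no term-by-term matching is required. What fails to match is the paper's own verbal description (``the $m$th smallest $k$ such that $k^{2}(k+1)$ is a square''), which taken literally would force $k+1$ to be a square and yield $k=(m+1)^{2}-1$; this is simply a misstatement of the cited sequence, not an obstacle to your proof. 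Two small caveats you should state explicitly: $m$ must be a positive integer (for $m\le -1$ the inner integrand has a non-integrable singularity in $t$, and $m=0$ is trivial) and $n\ge 2$; the improperness at $t=0$ is handled exactly as you say, since the inner integral grows only like $2\pi/\sqrt{t}$ there.
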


  The aforementioned sequence (2, 6, 3, 5, 30, 42,...) is given in \cite{A083481}.
  To illustrate the above phenomenon, let $n=4$, and evaluate the above double integral for $m=1,2,3,...$, and 
note the appearance of the aforementioned integer sequence (2, 6, 3, 5, 30, 42, ...):

\begin{center}

$4 (-4 + 3 \sqrt{2}) \pi$

$4 (-12 + 5 \sqrt{6}) \pi$

$8 (-12 + 7 \sqrt{3}) \pi$

$8 (-20 + 9 \sqrt{5}) \pi$

$4 (-60 + 11 \sqrt{30}) \pi$

$4 (-84 + 13 \sqrt{42}) \pi$

$8 (-56 + 15 \sqrt{14}) \pi$

\end{center}

  Another example of the above double-integral-quadratic-convergent phenomenon:

\begin{conjecture}

   Let $C_{n}$ represent the numerator of the $n$th convergent to $\sqrt{3}$ and let $D_{n}$
 represent the denominator of the $n$th convergent to $\sqrt{3}$. If $n \equiv 2 \mod 4$, then:

\begin{equation*}
\frac{1}{4} n \int_{0}^{\frac{1}{2}} \int_{-\pi}^{\pi} \frac{\cos(nx)}{t+\sin^{2}x} dx dt  = C_{\frac{n}{2}} (-C_{\frac{n}{2}} + D_{\frac{n}{2}} \sqrt{3}) \pi
\end{equation*}

\end{conjecture}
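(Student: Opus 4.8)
The plan is to collapse the double integral to a single elementary integral using Theorem~\ref{theorem2}, and then match the resulting closed form against the convergent expression via an algebraic identity for powers of $2+\sqrt{3}$. First I would fix $n=2m$ with $m$ odd (which is exactly the hypothesis $n\equiv 2\bmod 4$) and apply Theorem~\ref{theorem2} to the inner integral. Since $n$ is even the prefactor $\tfrac{(-1)^n+1}{2}$ equals $1$ and $\lfloor n/2\rfloor=m$, so
$$I(k):=\int_{-\pi}^{\pi}\frac{\cos(2mx)}{k+\sin^{2}x}\,dx=\frac{\pi}{\sqrt{k(k+1)}}\Bigl(2\cosh(2m\sinh^{-1}\sqrt{k})-(2k+1+2\sqrt{k(k+1)})^{m}+(2k+1-2\sqrt{k(k+1)})^{m}\Bigr).$$

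The decisive simplification is the substitution $k=\sinh^{2}\theta$. Then $2k+1=\cosh 2\theta$, $\sqrt{k(k+1)}=\sinh\theta\cosh\theta$, $\sinh^{-1}\sqrt{k}=\theta$, and $(2k+1\pm 2\sqrt{k(k+1)})^{m}=e^{\pm 2m\theta}$, so the bracket collapses to $2\cosh(2m\theta)-2\sinh(2m\theta)=2e^{-2m\theta}$, giving $I(k)=\dfrac{2\pi e^{-2m\theta}}{\sinh\theta\cosh\theta}$. Since $dk=2\sinh\theta\cosh\theta\,d\theta$, the denominator cancels exactly under the outer integral and the whole problem reduces to
$$\int_{0}^{1/2}I(k)\,dk=\int_{0}^{\Theta}4\pi e^{-2m\theta}\,d\theta,\qquad \Theta=\sinh^{-1}\tfrac{1}{\sqrt{2}}.$$
Evaluating this and using $e^{-2\Theta}=\bigl(\tfrac{\sqrt{3}-1}{\sqrt{2}}\bigr)^{2}=2-\sqrt{3}$ yields $\tfrac{2\pi}{m}\bigl(1-(2-\sqrt{3})^{m}\bigr)$; multiplying by $\tfrac{1}{4}n=\tfrac{m}{2}$ gives $\pi\bigl(1-(2-\sqrt{3})^{m}\bigr)$.

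It then remains to show $1-(2-\sqrt{3})^{m}=-C_{m}^{2}+C_{m}D_{m}\sqrt{3}$ for odd $m$. Here I would use the factorization $2+\sqrt{3}=\alpha^{2}$ with $\alpha=\tfrac{1+\sqrt{3}}{\sqrt{2}}$, so that $(2-\sqrt{3})^{m}=\alpha^{-2m}$ and, writing $(2+\sqrt{3})^{m}=a_{m}+b_{m}\sqrt{3}$, one has $a_{m}-1=\tfrac{1}{2}(\alpha^{m}-\alpha^{-m})^{2}$ and $b_{m}=\tfrac{1}{2\sqrt{3}}(\alpha^{m}-\alpha^{-m})(\alpha^{m}+\alpha^{-m})$. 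Because $\alpha+\alpha^{-1}=\sqrt{6}$ and $\alpha-\alpha^{-1}=\sqrt{2}$, a two-step induction on the recurrence $w_{m+1}=\sqrt{6}\,w_{m}-w_{m-1}$ shows that for odd $m$ one has $\alpha^{m}-\alpha^{-m}=C_{m}\sqrt{2}$ and $\alpha^{m}+\alpha^{-m}=D_{m}\sqrt{6}$ with $C_{m},D_{m}\in\mathbb{Z}$; hence $a_{m}-1=C_{m}^{2}$, $b_{m}=C_{m}D_{m}$, and $C_{m}^{2}-3D_{m}^{2}=-2$. Substituting $1-(2-\sqrt{3})^{m}=1-a_{m}+b_{m}\sqrt{3}=-C_{m}^{2}+C_{m}D_{m}\sqrt{3}$ produces the claimed form.

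The main obstacle is not the integral, which is clean, but the final identification of the pair $(C_{m},D_{m})$ with the numerator and denominator of the $m$th convergent to $\sqrt{3}$ in the paper's indexing. The Pell relation $C_{m}^{2}-3D_{m}^{2}=-2$ pins these integers to the convergents lying on the branch $x^{2}-3y^{2}=-2$ (namely $1/1,\,5/3,\,19/11,\dots$), so the remaining step is bookkeeping: checking, via the periodic expansion $\sqrt{3}=[1;\overline{1,2}]$, that the induction index matches the convergent index asserted, and confirming that the even-$m$ case genuinely fails to produce this form, since there $\alpha^{m}-\alpha^{-m}$ is a multiple of $\sqrt{3}$ and $a_{m}-1$ is not a perfect square, which is precisely why the hypothesis $n\equiv 2\bmod 4$ is required.
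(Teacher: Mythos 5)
The paper offers no proof of this statement: it is stated as a conjecture, supported only by numerical evidence, and explicitly left as an open problem (``We presently leave the above conjectures as open problems''). Your argument therefore cannot be compared against a proof in the paper --- instead it actually settles the conjecture, and as far as I can check it is correct. The substitution $k=\sinh^{2}\theta$ does collapse the closed form of Theorem~\ref{theorem2} to $I(k)=2\pi e^{-2m\theta}/(\sinh\theta\cosh\theta)$ (equivalently, the inner integral is $\tfrac{2\pi}{\sqrt{k(k+1)}}\bigl(2k+1-2\sqrt{k(k+1)}\bigr)^{m}$, which one can also verify directly from $\int_{-\pi}^{\pi}\tfrac{\cos mu}{a-\cos u}\,du=\tfrac{2\pi}{\sqrt{a^{2}-1}}(a-\sqrt{a^{2}-1})^{m}$ with $a=2k+1$), the Jacobian cancellation is exact, $e^{-2\Theta}=2-\sqrt{3}$ is right, and the reduction to $\pi\bigl(1-(2-\sqrt{3})^{m}\bigr)$ checks numerically against the paper's data ($m=1$ gives $(\sqrt3-1)\pi$, $m=3$ gives $5(-5+3\sqrt3)\pi$). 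The algebra with $\alpha=(1+\sqrt3)/\sqrt2$, $\alpha\pm\alpha^{-1}\in\{\sqrt6,\sqrt2\}$, is also correct and yields $a_{m}-1=C_{m}^{2}$, $b_{m}=C_{m}D_{m}$, $C_{m}^{2}-3D_{m}^{2}=-2$ for odd $m$.

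The one step you defer as ``bookkeeping'' is genuinely the only loose end, and it deserves a sentence in a final write-up: the paper never specifies its indexing of convergents, so you must fix the convention (first convergent $=1/1$, so that the odd-indexed convergents are $1/1,\,5/3,\,19/11,\dots$) for the statement to be true as written; with that convention the identification follows because the positive solutions of $x^{2}-3y^{2}=-2$ are exactly the odd-indexed convergent pairs of $[1;\overline{1,2}]$, and your sequences $C_{1},C_{3},C_{5},\dots=1,5,19,\dots$ satisfy the same three-term recurrence $C_{m+2}=4C_{m}-C_{m-2}$ with the same initial values. With that pinned down, your argument upgrades this item from conjecture to theorem, which is strictly more than the paper provides.
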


  (A similar phenomenon holds for $n \equiv 0 \mod 4$.)

  One may easily conclude that the above phenomenon does not hold for all rational $m$; consider:

\begin{conjecture}

  For all even positive $n$,

\begin{equation*}
 n \int_{0}^{\frac{1}{3}} \int_{-\pi}^{\pi} \frac{\cos(nx)}{t+\sin^{2}x} dx dt  = \frac{ 4 \times 3^{\frac{n}{2}} -4 }{3^{\frac{n}{2}}} \pi
\end{equation*}

\end{conjecture}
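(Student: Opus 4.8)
The plan is to collapse the double integral into a single elementary exponential integral by feeding the closed form of Theorem~\ref{theorem2} into the inner integral and then applying a hyperbolic substitution in the outer variable. Throughout I write $n = 2m$ with $m$ a positive integer, so that $\frac{(-1)^n+1}{2} = 1$ and $\lfloor n/2 \rfloor = m$.

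First I would specialize Theorem~\ref{theorem2} and simplify its bracket. Setting $\theta = \sinh^{-1}(\sqrt{t})$ gives $\sinh\theta = \sqrt t$ and $\cosh\theta = \sqrt{t+1}$, hence $e^{2\theta} = (\sqrt{t+1}+\sqrt t)^2 = 2t+1+2\sqrt{t(t+1)}$ and likewise $e^{-2\theta} = 2t+1-2\sqrt{t(t+1)}$ (their product being $(2t+1)^2 - 4t(t+1) = 1$). Consequently the two powers appearing in Theorem~\ref{theorem2} become $e^{\pm 2m\theta}$, so the difference in the first term is $2\sinh(2m\theta)$ while the cosh term is $\cosh(2m\theta)$, and the whole bracket collapses to
\begin{equation*}
\int_{-\pi}^{\pi} \frac{\cos(2mx)}{t+\sin^2 x}\,dx = \frac{2\pi}{\sqrt{t(t+1)}}\bigl(\cosh(2m\theta)-\sinh(2m\theta)\bigr) = \frac{2\pi\, e^{-2m\theta}}{\sqrt{t(t+1)}}.
\end{equation*}

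Next I would substitute $t = \sinh^2\theta$, so that $dt = \sinh(2\theta)\,d\theta$ and $\sqrt{t(t+1)} = \sinh\theta\cosh\theta = \tfrac12\sinh(2\theta)$, whence $dt/\sqrt{t(t+1)} = 2\,d\theta$. The crucial endpoint computation is that $t = \tfrac13$ forces $\sinh\theta = 1/\sqrt3$, $\cosh\theta = 2/\sqrt3$, so $e^{\theta} = \sqrt3$ and $\theta = \tfrac12\ln 3$, while $t=0$ gives $\theta=0$. Multiplying by $n = 2m$ and inserting the simplified integrand, the double integral becomes
\begin{equation*}
2m\int_0^{1/3} \frac{2\pi\, e^{-2m\theta}}{\sqrt{t(t+1)}}\,dt = 8m\pi \int_0^{(\ln 3)/2} e^{-2m\theta}\,d\theta = 8m\pi\cdot\frac{1-3^{-m}}{2m} = 4\pi\bigl(1-3^{-m}\bigr),
\end{equation*}
which is exactly $\frac{4\cdot 3^{n/2}-4}{3^{n/2}}\pi$ upon restoring $m = n/2$.

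The only genuine subtlety is the fortuitous arithmetic of the endpoints: the upper limit $t=\tfrac13$ is precisely the value for which $e^{-2m\theta}$ reduces to the clean power $3^{-m}$, and this is what produces a rational multiple of $\pi$ free of surds. I expect the main obstacle to be presentational rather than analytic — carefully justifying the collapse of the Theorem~\ref{theorem2} bracket to the single exponential $e^{-2m\theta}$ and verifying the identity $\sinh^{-1}(1/\sqrt3) = \tfrac12\ln 3$ — since once the inner integral is in exponential form the remaining integration is elementary. One should note in passing that although the inner integral diverges at $t=0$, the substitution is harmless there: the factor $dt/\sqrt{t(t+1)} = 2\,d\theta$ absorbs the $O(t^{-1/2})$ singularity, so the $\theta$-integral runs over the finite, smooth interval $[0,\tfrac12\ln 3]$. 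This also explains the remark preceding the conjecture that the phenomenon fails for general rational upper limits: for an arbitrary rational endpoint the associated $\theta$ is not of the form $\tfrac12\ln(\text{integer})$, so $e^{-2m\theta}$ does not simplify and the surd-free pattern breaks down.
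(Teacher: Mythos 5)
Your proposal is correct, but there is nothing in the paper to compare it against: the statement is one of several conjectures that the author explicitly "leaves as open problems," supported only by numerical evidence. Your argument actually settles it (for $n\geq 2$ even), conditional only on Theorem~\ref{theorem2}, which the paper does prove. The computation checks out: for $n=2m$ the bracket in Theorem~\ref{theorem2} does collapse to $2e^{-2m\theta}/\sqrt{t(t+1)}$ with $\theta=\sinh^{-1}\sqrt{t}$, since $e^{\pm 2\theta}=2t+1\pm 2\sqrt{t(t+1)}$ turns the power difference into $2\sinh(2m\theta)$ and $\cosh-\sinh=e^{-\theta}$; the substitution $t=\sinh^{2}\theta$ gives $dt/\sqrt{t(t+1)}=2\,d\theta$; and the endpoint $t=\tfrac13$ yields $e^{\theta}=\sqrt3$, hence $e^{-2m\theta}=3^{-m}$ and the value $4\pi(1-3^{-m})=\frac{4\cdot 3^{n/2}-4}{3^{n/2}}\pi$. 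You also correctly handle the only analytic subtlety, the integrable $O(t^{-1/2})$ singularity at $t=0$. Two small remarks: your closing explanation of why general rational upper limits fail is slightly off — what matters is whether $e^{2\theta}=2t+1+2\sqrt{t(t+1)}$ is rational (equivalently whether $t(t+1)$ is a rational square), not whether $\theta$ is half the log of an integer, and this is exactly the content of the sequence $k^2(k+1)$ being a square cited in Conjecture~\ref{conjecture2}; and your method transparently generalizes to give $n\int_0^{M}\int_{-\pi}^{\pi}\frac{\cos(nx)}{t+\sin^2 x}\,dx\,dt=4\pi\bigl(1-(2M+1+2\sqrt{M(M+1)})^{-n/2}\bigr)$, which would resolve several of the paper's other conjectures in the same stroke.
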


  Further conjectures related to number theory follow:

\begin{conjecture}

  Consider the sequence of numbers given by:

\begin{equation*}
\int_{0}^{m} \int_{-\pi}^{\pi} \frac{\cos(2x)}{t+\sin^{2}x} dx dt
\end{equation*}

  Factoring the maximum integer from these expressions, the resulting coefficient of the square root is given by the sequence
 of the number of solutions to $x^{2} \equiv 0 \mod n$.

\end{conjecture}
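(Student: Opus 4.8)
The plan is to evaluate the iterated integral in closed form using the results already established, and then to translate the resulting algebraic number into number-theoretic language. First I would compute the inner integral via Theorem~\ref{theorem2} with $n=2$. Since $\lfloor 2/2\rfloor = 1$ and $\tfrac{(-1)^2+1}{2}=1$, the first bracketed term collapses: its numerator is exactly $4\sqrt{t(t+1)}$, which cancels the denominator to leave $-4$, while $\cosh\!\left(2\sinh^{-1}\sqrt t\right)=1+2t$ by the identity $\cosh 2\theta = 1+2\sinh^2\theta$. This gives
\[
\int_{-\pi}^{\pi}\frac{\cos(2x)}{t+\sin^2 x}\,dx = \pi\left(-4 + \frac{2(2t+1)}{\sqrt{t(t+1)}}\right).
\]

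Next I would integrate in $t$ from $0$ to $m$. The only non-routine antiderivative is $\int \frac{2(2t+1)}{\sqrt{t(t+1)}}\,dt = 4\sqrt{t(t+1)}$, recognizable because $\frac{d}{dt}\sqrt{t^2+t}=\frac{2t+1}{2\sqrt{t^2+t}}$; the constant term contributes $-4m$ and the lower endpoint vanishes. Hence
\[
\int_0^m\!\int_{-\pi}^{\pi}\frac{\cos(2x)}{t+\sin^2 x}\,dx\,dt = 4\pi\left(\sqrt{m(m+1)}-m\right).
\]
Writing $m(m+1)=s^2 d$ with $d$ squarefree, so that $\sqrt{m(m+1)}=s\sqrt d$, the integral becomes $\pi(-4m+4s\sqrt d)$. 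Here ``factoring the maximum integer'' must be read as extracting $\gcd(4m,4s)=4\gcd(m,s)$ from the two integer coordinates of $\mathbb{Z}[\sqrt d]$, after which the coefficient of $\sqrt d$ is $s/\gcd(m,s)$.

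The number-theoretic heart is the identification of this coefficient. Let $a(N)=\prod_p p^{\lfloor v_p(N)/2\rfloor}$ denote the square root of the largest square dividing $N$. Because $m$ and $m+1$ are coprime, $s=a(m)\,a(m+1)$; and since $a(m+1)\mid m+1$ is coprime to $m$, we get $\gcd(m,s)=\gcd(m,a(m))=a(m)$ (as $a(m)\mid m$). Therefore the coefficient of the square root equals $a(m)\,a(m+1)/a(m)=a(m+1)$. It remains to identify $a(N)$ as the solution count: for a prime power $p^e$, the congruence $x^2\equiv 0 \pmod{p^e}$ forces $v_p(x)\ge\lceil e/2\rceil$, leaving $p^{\,e-\lceil e/2\rceil}=p^{\lfloor e/2\rfloor}$ admissible residues, and by the Chinese Remainder Theorem the count for general $N$ is multiplicative and equals $\prod_p p^{\lfloor v_p(N)/2\rfloor}=a(N)$. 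Thus the $m$-th term of the sequence yields the coefficient $a(m+1)$, the number of solutions to $x^2\equiv 0 \pmod{m+1}$.

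The analytic steps are immediate consequences of Theorem~\ref{theorem2}, so the real content lies in the factoring step, and I expect that to be the main obstacle. The delicate points are, first, making ``factor the maximum integer'' precise as the gcd of the two integer coordinates, and second, establishing the pair of identities $s=a(m)\,a(m+1)$ and $\gcd(m,s)=a(m)$, which rest entirely on the coprimality of consecutive integers together with the multiplicativity of $a$. The one genuinely subtle issue to watch is the index offset: the term indexed by $m$ produces $a(m+1)$ rather than $a(m)$, so the generated sequence matches ``the number of solutions to $x^2\equiv 0 \pmod n$'' only after a shift by one. Both the offset and the worry that the squarefree part $d$ might collapse the integer factor unexpectedly are resolved by the same coprimality argument, which is the crux of the proof.
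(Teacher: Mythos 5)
You should first note that the paper offers \emph{no} proof of this statement: it is presented purely as a conjecture, supported only by the list of evaluations for $m=1,\dots,16$, and is explicitly left open. Your argument therefore does not merely differ in route from the paper's --- it supplies a proof where the paper has none, and as far as I can tell it is correct. The closed form $\int_0^m\int_{-\pi}^{\pi}\frac{\cos(2x)}{t+\sin^2x}\,dx\,dt=4\pi\bigl(\sqrt{m(m+1)}-m\bigr)$ checks out (one can bypass Theorem~\ref{theorem2} entirely by writing $\cos 2x=-2(t+\sin^2x)+(2t+1)$ and using $\int_{-\pi}^{\pi}\frac{dx}{t+\sin^2x}=\frac{2\pi}{\sqrt{t(t+1)}}$, which makes the analytic half even more self-contained), and it reproduces every displayed value in the paper, e.g.\ $m=8$ gives $\pi(-32+24\sqrt2)=8(-4+3\sqrt2)\pi$. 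The arithmetic half --- $s=a(m)a(m+1)$ and $\gcd(m,s)=a(m)$ from coprimality of consecutive integers, hence residual coefficient $a(m+1)$, which equals the solution count of $x^2\equiv0\pmod{m+1}$ by the prime-power computation and CRT --- is sound. Two small points deserve explicit mention in a write-up: (i) since $m^2<m(m+1)<(m+1)^2$, the product $m(m+1)$ is never a perfect square, so the squarefree part $d$ is always $\ge 2$ and the ``coefficient of the square root'' is well defined; and (ii) the phrase ``factoring the maximum integer'' is informal in the paper, so your reading of it as $\gcd$ of the two integer coordinates of $-4m+4s\sqrt d$ in $\mathbb{Z}[\sqrt d]$ should be stated as a definition --- it is the only reading consistent with the paper's tabulated data (e.g.\ $\gcd(64,16)=16$ for $m=16$), and your observation that the sequence appears shifted (index $m$ yields $a(m+1)$) likewise matches the paper's list, which starts at $n=2$ of A000188.
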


  This sequence (1, 1, 2, 1, 1, 1, 2, 3, 1, 1, 2, 1, 1, 1, 4, 1, ...) is given by \cite{A000188}.
 Note the appearance of this sequence (and of \cite{A083481}),
 evaluating the above double integral for
 $m = 1,2,3,...$:

$4 (-1 + \sqrt{2}) \pi    $, 
$4 (-2 + \sqrt{6}) \pi     $, 
$4 (-3 + 2 \sqrt{3}) \pi$, 
$8 (-2 + \sqrt{5}) \pi$, 
$4 (-5 + \sqrt{30}) \pi$, 
$4 (-6 + \sqrt{42}) \pi$, 
$4 (-7 + 2 \sqrt{14}) \pi$, 
$8 (-4 + 3 \sqrt{2}) \pi$, 
$12 (-3 + \sqrt{10}) \pi$, 
$4 (-10 + \sqrt{110}) \pi$, 
$4 (-11 + 2 \sqrt{33}) \pi$, 
$8 (-6 + \sqrt{39}) \pi$, 
$4 (-13 + \sqrt{182}) \pi$, 
$4 (-14 + \sqrt{210}) \pi$, 
$4 (-15 + 4 \sqrt{15}) \pi$, 
$16 (-4 + \sqrt{17}) \pi$, ...

  Factoring the maximum integer from these expressions, note the sequence of (the absolute value of the)
 initial negative coefficients (1, 2, 3, 2, 5, 6, 7, 4, 3, 10, 11, 6, 13, 14, 15, 4, ...), which corresponds to the 
Smarandache sequence
 \cite{A019554}.
 We conjecture that:

\begin{conjecture}

  Consider the sequence of numbers given by:

\begin{equation*}
\int_{0}^{m} \int_{-\pi}^{\pi} \frac{\cos(2x)}{t+\sin^{2}x} dx dt
\end{equation*}

  Factoring the maximum integer from these expressions, the (absolute value of the) resulting negative
 coefficient is given by the sequence given by the smallest number whose square is divisible by $n$: 
(1, 2, 3, 2, 5, 6, 7, 4, 3, 10, 11, 6, 13, 14, 15, 4, 17, 6, ...).

\end{conjecture}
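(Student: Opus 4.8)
The plan is to first collapse the double integral into a closed algebraic form by specializing Theorem~\ref{theorem2}, and then to reduce the claim to an elementary statement about the prime factorization of $m$. First I would set $n=2$ in Theorem~\ref{theorem2} (equivalently Lemma~\ref{lemma2}). Then $\lfloor n/2\rfloor = 1$ and $\tfrac{(-1)^{n}+1}{2}=1$, while $z_{1,k}=\cosh(2\sinh^{-1}\sqrt{k})=1+2\sinh^{2}(\sinh^{-1}\sqrt{k})=2k+1$ and $y_{1,k}=4$. Writing $k=t$, this yields
\begin{equation*}
\int_{-\pi}^{\pi}\frac{\cos(2x)}{t+\sin^{2}x}\,dx=\pi\left(-4+\frac{2(2t+1)}{\sqrt{t(t+1)}}\right).
\end{equation*}

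Next I would integrate in $t$ over $[0,m]$. The essential observation is that $\frac{d}{dt}\sqrt{t(t+1)}=\frac{2t+1}{2\sqrt{t(t+1)}}$, so the second summand equals $4\frac{d}{dt}\sqrt{t(t+1)}$ and has antiderivative $4\sqrt{t(t+1)}$, which is continuous and vanishes at $t=0$; this simultaneously shows that the improper integral at the endpoint $t=0$ converges. Therefore
\begin{equation*}
\int_{0}^{m}\int_{-\pi}^{\pi}\frac{\cos(2x)}{t+\sin^{2}x}\,dx\,dt=4\pi\left(-m+\sqrt{m(m+1)}\right),
\end{equation*}
which reproduces every tabulated value.

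The remaining, genuinely number-theoretic, step is to identify the ``maximum integer'' and the surviving negative coefficient. Since $\gcd(m,m+1)=1$, I would write $m=a^{2}u$ and $m+1=b^{2}v$ with $u,v$ squarefree. Coprimality of $m$ and $m+1$ forces $\gcd(a,b)=\gcd(u,b)=\gcd(u,v)=1$, so $uv$ is squarefree and $\sqrt{m(m+1)}=ab\sqrt{uv}$; moreover $uv\ge 2$ (two consecutive positive integers are never both squares), so $\sqrt{uv}$ is irrational and the representation of the integral as $(-4m+4ab\sqrt{uv})\pi$ with integer coefficients $-4m=-4a^{2}u$ and $4ab$ is unambiguous. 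The greatest integer factor is then $\gcd(4a^{2}u,4ab)=4a\gcd(au,b)=4a$, where $\gcd(au,b)=1$ follows from $\gcd(a,b)=\gcd(u,b)=1$. After dividing out $4a$ the parenthesized factor is $-au+b\sqrt{uv}$, so the absolute value of the negative coefficient is exactly $au$.

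Finally I would confirm that $au$ is the smallest number whose square is divisible by $m$. With $m=\prod_{i}p_{i}^{e_{i}}$ one has $a=\prod_{i}p_{i}^{\lfloor e_{i}/2\rfloor}$ and $u=\prod_{e_{i}\text{ odd}}p_{i}$, hence $au=\prod_{i}p_{i}^{\lceil e_{i}/2\rceil}$; since $m\mid N^{2}$ holds iff $v_{p_{i}}(N)\ge\lceil e_{i}/2\rceil$ for all $i$, the least such $N$ is precisely $\prod_{i}p_{i}^{\lceil e_{i}/2\rceil}=au$, the entry of \cite{A019554}. I expect the main obstacle to be the bookkeeping in the factorization step: one must verify that $4a$ is genuinely \emph{maximal}, i.e.\ that no further common factor survives between $au$ and $b$, and this is exactly the point where the coprimality of consecutive integers is indispensable. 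Everything else is routine once the closed form $4\pi(-m+\sqrt{m(m+1)})$ is in hand.
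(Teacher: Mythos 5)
Your argument is correct, and it is worth emphasizing that the paper offers no proof at all of this statement: it is presented as a conjecture, supported only by the list of evaluations for $m=1,\dots,16$, and explicitly left as an open problem. Your proposal therefore does not merely reprove something by another route; it settles the conjecture. The two halves of your argument both check out. The closed form $\int_{0}^{m}\int_{-\pi}^{\pi}\frac{\cos(2x)}{t+\sin^{2}x}\,dx\,dt=4\pi\bigl(-m+\sqrt{m(m+1)}\bigr)$ reproduces every tabulated value (e.g. $m=8$ gives $4\pi(-8+6\sqrt{2})=8(-4+3\sqrt{2})\pi$), and the arithmetic step correctly identifies the maximal integer factor as $4a$ and the surviving negative coefficient as $au=\prod_i p_i^{\lceil e_i/2\rceil}$, which is exactly A019554$(m)$; the coprimality bookkeeping ($\gcd(a,b)=\gcd(u,b)=\gcd(u,v)=1$) is the right and necessary ingredient, and normalizing the radicand $uv$ to be squarefree is what makes ``the maximum integer'' well defined. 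Two small remarks. First, you can make the inner integral independent of the paper's Lemma~\ref{lemma2} (whose inductive verification contains dubious intermediate steps such as the claimed evaluation $\frac{4\sin(2n\pi)}{n-1}$) by writing $\cos(2x)=(2t+1)-2(t+\sin^{2}x)$ and using only $\int_{-\pi}^{\pi}\frac{dx}{t+\sin^{2}x}=\frac{2\pi}{\sqrt{t(t+1)}}$; this is cleaner and avoids inheriting any gaps. Second, your closed form immediately disposes of the two adjacent conjectures as well: the maximal integer factor is $4a=4\cdot$A000188$(m)$, and the coefficient of the square root after factoring is $b=$ A000188$(m+1)$, which explains the paper's other observed sequence.
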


\begin{conjecture}

  Consider the sequence of numbers given by, for $m=1,2,3,...$:

\begin{equation*}
\int_{0}^{\frac{m+1}{m}} \int_{-\pi}^{\pi} \frac{\cos(2x)}{t+\sin^{2}x} dx dt
\end{equation*}

  Factoring the maximum integer from these numbers, the (absolutel value of the)
 resulting negative coefficient is given by the
 smallest number whose square is divisible by $n$. 

\end{conjecture}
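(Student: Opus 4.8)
The plan is to evaluate the double integral in closed form and then extract its arithmetic content. First I would specialize Lemma~\ref{lemma2} to $n=1$ (equivalently, invoke the base case $P(1,k)$ proved there) to obtain, for $t>0$,
\begin{equation*}
\int_{-\pi}^{\pi}\frac{\cos(2x)}{t+\sin^{2}x}\,dx=\left(-4+\frac{2(2t+1)}{\sqrt{t(t+1)}}\right)\pi.
\end{equation*}
Next I would integrate in $t$. The decisive observation is that $\frac{d}{dt}\sqrt{t(t+1)}=\frac{2t+1}{2\sqrt{t(t+1)}}$, so the integrand equals $\pi\bigl(-4+4\frac{d}{dt}\sqrt{t(t+1)}\bigr)$, and the $t$-integral converges at the endpoint $0$ because the singularity there is only of order $t^{-1/2}$. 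Hence for every $b>0$,
\begin{equation*}
\int_{0}^{b}\int_{-\pi}^{\pi}\frac{\cos(2x)}{t+\sin^{2}x}\,dx\,dt=4\pi\left(-b+\sqrt{b(b+1)}\right),
\end{equation*}
which, on taking $b=m$, already reproduces the values tabulated for the $\int_0^m$ conjecture.

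The third step is the substitution $b=\frac{m+1}{m}$. Since $b(b+1)=\frac{(m+1)(2m+1)}{m^{2}}$, this gives
\begin{equation*}
\int_{0}^{\frac{m+1}{m}}\int_{-\pi}^{\pi}\frac{\cos(2x)}{t+\sin^{2}x}\,dx\,dt=\frac{4\pi}{m}\left(-(m+1)+\sqrt{(m+1)(2m+1)}\right),
\end{equation*}
so the entire claim reduces to a statement about the simplified radical. Write $\sqrt{(m+1)(2m+1)}=h\sqrt{E}$ with $E$ squarefree, so that $h^{2}$ is the largest square dividing $(m+1)(2m+1)$, and reduce the bracket $-(m+1)+h\sqrt{E}$ to the shape $-A+B\sqrt{E}$ with $\gcd(A,B)=1$ by dividing out $d=\gcd(m+1,h)$ (this is exactly what ``factoring the maximum integer'' amounts to, matching the tabulated examples). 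Then $A=\frac{m+1}{d}$, and the conjecture is precisely the assertion that $A=s(m+1)$, where $s(N)$ denotes the smallest number whose square is divisible by $N$ (the sequence~\cite{A019554}).

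The arithmetic identity $A=s(m+1)$ is the heart of the matter and the step I expect to be the main obstacle to make airtight. I would prove it as a short self-contained lemma. First, $\gcd(m+1,2m+1)=1$ because $2m+1=2(m+1)-1$; hence the square part of $(m+1)(2m+1)$ is the product of the square parts of the two coprime factors, and $h=w\,w'$ with $w=\sqrt{\text{largest square dividing }(m+1)}$ and $w'$ coprime to $m+1$. Therefore $\gcd(m+1,h)=\gcd(m+1,w)$, and the factor $w'$ contributed by $2m+1$ plays no role. Second, writing $m+1=\prod_{p}p^{c_{p}}$ gives $w=\prod_{p}p^{\lfloor c_{p}/2\rfloor}$, so $\gcd(m+1,w)=\prod_{p}p^{\lfloor c_{p}/2\rfloor}$ and
\begin{equation*}
A=\frac{m+1}{d}=\prod_{p}p^{\,c_{p}-\lfloor c_{p}/2\rfloor}=\prod_{p}p^{\lceil c_{p}/2\rceil}.
\end{equation*}
It remains to identify $\prod_{p}p^{\lceil c_{p}/2\rceil}$ with $s(m+1)$: its square has $p$-adic valuation $2\lceil c_{p}/2\rceil\ge c_{p}$, so it is a valid candidate, and any $t$ with $(m+1)\mid t^{2}$ must satisfy $v_{p}(t)\ge\lceil c_{p}/2\rceil$ for each $p$, so it is the least such. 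As $m$ runs through $1,2,3,\dots$ the negative coefficients are then $s(2),s(3),s(4),\dots$, i.e.\ the claimed Smarandache sequence (indexed from $n=2$). Beyond this lemma, the only care needed is the routine square-free reduction of the radical and the verification that the coprime factor $w'$ coming from $2m+1$ does not disturb the $\gcd$; the antiderivative and substitution in the first two steps are entirely elementary.
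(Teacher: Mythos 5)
The statement you are proving is left in the paper as a conjecture with no proof at all (the author explicitly ``presently leave[s] the above conjectures as open problems''), so there is nothing to compare against on the paper's side: your argument, if written out, would actually settle it. I checked the steps and they are sound. The inner integral is most safely obtained not from the paper's shaky complex-logarithm verification of $P(1,k)$ but from the one-line identity $\frac{\cos 2x}{t+\sin^2 x}=-2+\frac{2t+1}{t+\sin^2 x}$ together with $\int_{-\pi}^{\pi}\frac{dx}{t+\sin^2x}=\frac{2\pi}{\sqrt{t(t+1)}}$; either way your antiderivative gives $\int_0^b\int_{-\pi}^{\pi}\frac{\cos 2x}{t+\sin^2x}\,dx\,dt=4\pi\left(-b+\sqrt{b(b+1)}\right)$, which reproduces all of the paper's tabulated values for $b=m$, and the substitution $b=\tfrac{m+1}{m}$ is correct. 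The arithmetic lemma is also right: $\gcd(m+1,2m+1)=1$, so the square part of the radicand splits multiplicatively, the factor $w'$ from $2m+1$ is coprime to $m+1$ and drops out of the $\gcd$, and $\frac{m+1}{\gcd(m+1,w)}=\prod_p p^{\lceil c_p/2\rceil}=s(m+1)$; moreover the reduced coefficients $A=(m+1)/w$ and $B=w'$ really are coprime, justifying the ``maximum integer'' normalization. Two small points you should make explicit in a final write-up: the conjecture as stated never ties $n$ to $m$, and your computation shows the coefficient is $s(m+1)$ (so the Smarandache sequence appears shifted, starting at $s(2)=2$); and ``factoring the maximum integer'' must be interpreted as reducing the bracket to coprime integer coefficients with squarefree radicand, since the overall prefactor $4\pi/m$ is only rational --- your reading is the only one consistent with the paper's displayed data, but it is an interpretation, not something the conjecture pins down.
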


  Again the Smarandache sequence \cite{A019554} arises.

  Consider the sequence of numbers given by, for $m=1,2,3,...$:

\begin{equation*}
\int_{0}^{\frac{m+1}{m}} \int_{-\pi}^{\pi} \frac{1}{t+\sin^{2}x} dx dt
\end{equation*}

  Unexpected patterns arise in the evaluation of the above double integral.
Let:

\begin{equation*}
\int_{0}^{\frac{m+1}{m}} \int_{-\pi}^{\pi} \frac{1}{t+\sin^{2}x} dx dt = -\pi (\ln (a_{m}^{2}) -2 \ln(b_{m} + c_{m} \sqrt{d_{m}}) )
\end{equation*}

  Using Sloane's online sequence recognition tool,
 we conjecture that:

\begin{conjecture}\label{conjecture8}

 $b_{m}$ corresponds to {\bf A165367}, the trisection $a(n) =$ {\bf A026741}  $(3*n+2)$, where 
{\bf A026741} is $n$ if $n$ odd, $n/2$ if $n$ even. 
  $a_{m}^{2}$ corresponds to {\bf A168077}, which is $a(2n)= ${\bf A129194}$(2n)/2$; $a(2n+1)=$ {\bf A129194}$(2n+1)$, 
where {\bf A129194}   is $n^{2} (\frac{3}{4}-(-1)^{\frac{n}{4}})$. 

\end{conjecture}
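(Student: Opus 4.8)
The plan is to collapse the double integral to a single logarithm by iterated integration and then read off the reduced algebraic form. For the inner integral I would invoke the $n=0$ evaluation already established in the proof of Lemma~\ref{lemma2}, namely
\[
\int_{-\pi}^{\pi} \frac{1}{t+\sin^{2}x}\,dx = \frac{2\sqrt{1+\tfrac1t}\,\pi}{1+t} = \frac{2\pi}{\sqrt{t(t+1)}},
\]
valid for $t>0$. This turns the double integral into the one-dimensional improper integral $\int_{0}^{(m+1)/m} \frac{2\pi}{\sqrt{t(t+1)}}\,dt$, whose integrand behaves like $2\pi/\sqrt t$ near $t=0$ and is therefore integrable there.

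Next I would compute the outer integral using the antiderivative $\int \frac{dt}{\sqrt{t(t+1)}} = 2\ln\!\big(\sqrt t+\sqrt{t+1}\big)+C$ (equivalently $\operatorname{arccosh}(2t+1)$), verified by direct differentiation. Since this antiderivative vanishes at $t=0$, evaluation at the upper limit $t=(m+1)/m$ gives
\[
\int_{0}^{\frac{m+1}{m}} \int_{-\pi}^{\pi} \frac{1}{t+\sin^{2}x}\,dx\,dt = 4\pi\ln\!\left(\frac{\sqrt{m+1}+\sqrt{2m+1}}{\sqrt m}\right) = 2\pi\ln\!\left(\frac{3m+2+2\sqrt{(m+1)(2m+1)}}{m}\right),
\]
the last step coming from squaring inside the logarithm, since $(\sqrt{m+1}+\sqrt{2m+1})^{2} = 3m+2+2\sqrt{(m+1)(2m+1)}$.

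It then remains to rewrite this in the asserted normal form $-\pi\big(\ln(a_m^{2})-2\ln(b_m+c_m\sqrt{d_m})\big) = 2\pi\ln\!\frac{b_m+c_m\sqrt{d_m}}{a_m}$, i.e.\ to reduce $\frac{3m+2+2\sqrt{(m+1)(2m+1)}}{m}$ to lowest terms with squarefree radicand. Writing $(m+1)(2m+1)=e_m^{2}d_m$ with $d_m$ squarefree, the numerator becomes $(3m+2)+2e_m\sqrt{d_m}$, and the integer to be factored out is $g=\gcd(3m+2,\,2e_m,\,m)$. Here I would use the key arithmetic fact that $\gcd(3m+2,m)\mid(3m+2)-3m=2$, so $\gcd(3m+2,m)=\gcd(2,m)$ already divides $2e_m$; hence $g=\gcd(2,m)$, equal to $1$ for odd $m$ and $2$ for even $m$. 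This yields the explicit forms $a_m=m/g$, $b_m=(3m+2)/g$, $c_m=2e_m/g$, with $\gcd(a_m,b_m)=1$, so the representation is genuinely reduced.

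A final parity case split identifies the sequences: for $m$ odd, $a_m=m$ and $b_m=3m+2$; for $m$ even, $a_m=m/2$ and $b_m=(3m+2)/2$. Thus $a_m=\textbf{A026741}(m)$ and $b_m=\textbf{A026741}(3m+2)=\textbf{A165367}(m)$ directly from the stated definition of \textbf{A165367}, while $a_m^{2}$ is the square of $\textbf{A026741}(m)$, which matches \textbf{A168077}. The two integrations are routine; I expect the main obstacle to be the reduction step — specifically, verifying that extracting the squarefree part of the radicand introduces no further common factor beyond $\gcd(2,m)$ (which is exactly the content of $\gcd(3m+2,m)\mid 2$) — together with confirming that the squared sequence $\textbf{A026741}(m)^{2}$ indeed coincides with \textbf{A168077} as it is defined through \textbf{A129194}, whose stated closed form must first be disentangled.
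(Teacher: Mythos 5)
The statement you were asked to prove is, in the paper, not proved at all: Conjecture~\ref{conjecture8} is explicitly left as an open problem, supported only by output from Sloane's sequence recognizer and the table of computed values in the appendix. Your argument actually settles it, and it is correct. The inner integral $\int_{-\pi}^{\pi}\frac{dx}{t+\sin^2 x}=\frac{2\pi}{\sqrt{t(t+1)}}$ is exactly the $P(0,k)$ evaluation the paper already uses in Lemma~\ref{lemma2}, the antiderivative $2\ln(\sqrt t+\sqrt{t+1})$ and its vanishing at $t=0$ are right, and the closed form $2\pi\ln\bigl(\tfrac{3m+2+2\sqrt{(m+1)(2m+1)}}{m}\bigr)$ checks against every entry in the appendix (note, though, that the appendix list actually begins at $m=4$: e.g.\ $\tfrac{14+6\sqrt5}{4}=\tfrac{7+3\sqrt5}{2}$ gives the first row, while $m=1$ yields $2\pi\ln(5+2\sqrt6)$ with $a_1=1$, $b_1=5$, consistent with your formulas and with the OEIS offset of \textbf{A165367}). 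Your reduction step is the one genuinely nontrivial point and you handle it correctly: $\gcd(3m+2,m)=\gcd(2,m)$ divides $2e_m$, so the extracted integer is exactly $\gcd(2,m)$, giving $a_m=\textbf{A026741}(m)$ and $b_m=\textbf{A026741}(3m+2)$; uniqueness of the normal form follows since $d_m$ is squarefree and $\gcd(a_m,b_m,c_m)=1$. Two small things worth making explicit if this is written up: (i) the conjecture's normal form $-\pi(\ln(a_m^2)-2\ln(b_m+c_m\sqrt{d_m}))$ is only well defined once one stipulates the squarefree/reduced normalization you adopt, which should be stated as part of the claim; and (ii) the identification $a_m^2=\textbf{A168077}(m)$ requires reading the paper's garbled formula for \textbf{A129194} as $n^2(3-(-1)^n)/4$, under which $\textbf{A168077}(2n)=n^2$ and $\textbf{A168077}(2n+1)=(2n+1)^2$, i.e.\ exactly $\textbf{A026741}(m)^2$, as you note. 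In short: the paper offers numerical evidence where you supply a proof, and as a bonus your closed form also resolves the quantities $c_m$ and $d_m$ (namely $2e_m/\gcd(2,m)$ and the squarefree part of $(m+1)(2m+1)$), which the paper leaves open.
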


  We presently leave as open problems the evaluation of $c_{m}$ and $d_{m}$ (defined above).

Let:

\begin{equation*}
\int_{0}^{\frac{m+1}{m}} \int_{-\pi}^{\pi} \frac{\cos(4x)}{t+\sin^{2}x} dx dt = \pi a_{m} ( -b_{m} + c_{m} \sqrt{d_{m}}       )
\end{equation*}

  Using Sloane's online sequence recognition tool,
 we conjecture that:

\begin{conjecture}\label{conjecture9}

 $c_{m}$ corresponds to {\bf A165367}, the trisection $a(n) =$ {\bf A026741}  $(3*n+2)$, where 
{\bf A026741} is $n$ if $n$ odd, $n/2$ if $n$ even. 

\end{conjecture}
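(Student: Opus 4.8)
The plan is to make the double integral completely explicit by first evaluating the inner $x$-integral through Theorem~\ref{theorem2}, then integrating the resulting elementary function of $t$, and finally reading off $c_m$ by a parity/gcd computation. First I would specialize Theorem~\ref{theorem2} to $n=4$, so that $\lfloor n/2\rfloor=2$ and $\tfrac{(-1)^n+1}{2}=1$. Using the identity $(2t+1+2\sqrt{t(t+1)})^2-(2t+1-2\sqrt{t(t+1)})^2=8(2t+1)\sqrt{t(t+1)}$ (so the $y$-part collapses to $-8(2t+1)$) together with $\cosh(4\sinh^{-1}\sqrt t)=2(2t+1)^2-1=8t^2+8t+1$, the inner integral reduces to
\begin{equation*}
\int_{-\pi}^{\pi}\frac{\cos(4x)}{t+\sin^2 x}\,dx=\pi\Bigl(-8(2t+1)+\frac{2(8t^2+8t+1)}{\sqrt{t(t+1)}}\Bigr).
\end{equation*}

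Next I would integrate over $t\in[0,M]$ with $M=\tfrac{m+1}{m}$. The polynomial term contributes $-8\int_0^M(2t+1)\,dt=-8M(M+1)$. For the radical term the key manipulation is $16t^2+16t=16(t^2+t)$, giving $\frac{2(8t^2+8t+1)}{\sqrt{t^2+t}}=16\sqrt{t^2+t}+\frac{2}{\sqrt{t^2+t}}$. The decisive observation is that, after completing the square $t^2+t=(t+\tfrac12)^2-\tfrac14$, the logarithmic parts of the two antiderivatives exactly cancel: $\int 16\sqrt{t^2+t}\,dt=8(t+\tfrac12)\sqrt{t^2+t}-2\ln\!\bigl(t+\tfrac12+\sqrt{t^2+t}\bigr)$ and $\int\frac{2}{\sqrt{t^2+t}}\,dt=2\ln\!\bigl(t+\tfrac12+\sqrt{t^2+t}\bigr)$ sum to the purely algebraic $(8t+4)\sqrt{t^2+t}$. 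Since the $1/\sqrt t$ singularity at the origin is integrable and this antiderivative vanishes at $t=0$, I obtain
\begin{equation*}
\int_0^{M}\!\!\int_{-\pi}^{\pi}\frac{\cos(4x)}{t+\sin^2 x}\,dx\,dt=\pi\bigl(-8M(M+1)+(8M+4)\sqrt{M(M+1)}\bigr).
\end{equation*}
As a consistency check, setting $M=m$ reproduces the tabulated values $4(-4+3\sqrt2)\pi,\ 4(-12+5\sqrt6)\pi,\ 8(-12+7\sqrt3)\pi,\dots$ listed after Conjecture~\ref{conjecture2}.

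Substituting $M=\tfrac{m+1}{m}$, so that $M(M+1)=\tfrac{(m+1)(2m+1)}{m^2}$ and $8M+4=\tfrac{4(3m+2)}{m}$, yields
\begin{equation*}
\int_0^{\frac{m+1}{m}}\!\!\int_{-\pi}^{\pi}\frac{\cos(4x)}{t+\sin^2 x}\,dx\,dt=\frac{4\pi}{m^2}\Bigl(-2(m+1)(2m+1)+(3m+2)\sqrt{(m+1)(2m+1)}\Bigr).
\end{equation*}
To extract $c_m$ I would write $(m+1)(2m+1)=d_m g_m^2$ with $d_m$ squarefree, so $\sqrt{(m+1)(2m+1)}=g_m\sqrt{d_m}$. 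Putting the rational and radical coefficients over the common denominator $m^2$ gives integer numerators $-8 d_m g_m^2$ and $4(3m+2)g_m$; dividing by their gcd (which is $4g_m\gcd(2 d_m g_m,3m+2)$) and taking $d_m$ squarefree forces the normalized decomposition $\pi a_m(-b_m+c_m\sqrt{d_m})$ with coprime $b_m,c_m$ and
\begin{equation*}
c_m=\frac{3m+2}{\gcd(3m+2,\,2 d_m g_m)}.
\end{equation*}

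The number-theoretic heart is evaluating this gcd. Since $3m+2=3(m+1)-1$ gives $\gcd(3m+2,m+1)=1$, and $2m+1=2(m+1)-1$ with the Euclidean algorithm gives $\gcd(3m+2,2m+1)=1$, I conclude $\gcd\bigl(3m+2,(m+1)(2m+1)\bigr)=1$, hence $\gcd(3m+2,d_m g_m)=1$ and $\gcd(3m+2,2 d_m g_m)$ is a power of $2$. A short parity check finishes it: $(m+1)(2m+1)$ is odd exactly when $m$ is even, so $d_m g_m$ is odd there and no extra factors of $2$ intrude, giving $\gcd(3m+2,2 d_m g_m)=\gcd(2,3m+2)$, which is $2$ for even $m$ and $1$ for odd $m$. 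Therefore $c_m=(3m+2)/2$ for even $m$ and $c_m=3m+2$ for odd $m$, i.e.\ exactly $\textbf{A026741}(3m+2)=\textbf{A165367}(m)$, as claimed. I expect the main obstacle to be this final step: pinning down the normalization so that $c_m$ is well defined and verifying that the only common factor between $3m+2$ and the reduced radical data is the single parity factor of $2$ (despite $v_2(3m+2)$ possibly being large); by contrast, the integration itself is routine once the logarithmic cancellation is spotted.
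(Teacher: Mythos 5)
Your proposal is correct, and the most important thing to say about it is that the paper contains no proof of this statement whatsoever: Conjecture~\ref{conjecture9} is supported there only by the table of \emph{Mathematica} evaluations in the appendix and a match reported from Sloane's sequence-recognition tool, and the author explicitly leaves it as an open problem. Your route --- specializing Theorem~\ref{theorem2} to $n=4$ so the inner integral becomes $\pi\bigl(-8(2t+1)+2(8t^2+8t+1)/\sqrt{t(t+1)}\bigr)$, spotting that the logarithmic pieces of the two antiderivatives cancel to leave the purely algebraic primitive $(8t+4)\sqrt{t^2+t}$, and hence obtaining the closed form $\pi\bigl(-8M(M+1)+(8M+4)\sqrt{M(M+1)}\bigr)$ --- upgrades the conjecture to a theorem, conditional only on Theorem~\ref{theorem2} (whose own proof in the paper is sketchy, but whose statement checks out against independent evaluations such as $\int_{-\pi}^{\pi}\frac{\cos 4x}{1+\sin^2x}\,dx=\pi(17\sqrt2-24)$). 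Your formula reproduces both appendix tables exactly, and the final arithmetic, $\gcd(3m+2,m+1)=\gcd(3m+2,2m+1)=1$ so that $c_m=(3m+2)/\gcd(3m+2,2d_mg_m)=\textbf{A026741}(3m+2)$ with the single parity factor of $2$ accounting for the halving at even $m$, is precisely the number-theoretic content the paper's numerics were gesturing at. The only refinement I would ask for is that you state the normalization convention explicitly ($d_m$ squarefree, $b_m$ and $c_m$ coprime positive integers), since the paper never pins down what $\pi a_m(-b_m+c_m\sqrt{d_m})$ means; but your choice is the unique one consistent with the tabulated values, so this is a presentational point rather than a gap.
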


Let:

\begin{equation*}
\int_{0}^{m \times \sqrt{2}} \int_{-\pi}^{\pi} \frac{1}{t+\sin^{2}x} dx dt = 2 \pi \ln (1 + 2m \sqrt{2}  + a_{m} \sqrt{b_{m} (2m + \sqrt{2}) }    )
\end{equation*}

  Using Sloane's online sequence recognition tool,
 we conjecture that:

\begin{conjecture}\label{conjecture10}

$b_{m}$ corresponds to the Smarandache sequence {\bf A007913}, which is the squarefree part
 of $n: a(n) =$ smallest positive number $m$ such that
$ n/m$ is a square.  

\end{conjecture}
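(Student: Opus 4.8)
The plan is to collapse the double integral to a single elementary integral, evaluate it in closed form, and then settle the claim about $b_m$ by a short arithmetic argument. First I would invoke the inner evaluation already established in the proof of Lemma~\ref{lemma2}, namely
\begin{equation*}
\int_{-\pi}^{\pi} \frac{dx}{t + \sin^{2} x} = \frac{2\sqrt{1 + \tfrac{1}{t}}\,\pi}{1 + t} = \frac{2\pi}{\sqrt{t(t+1)}},
\end{equation*}
valid for every $t > 0$. Since the inner $x$-integration is exactly this quantity, and the integrand is continuous and positive on $(0, m\sqrt{2}]$, the left-hand side of Conjecture~\ref{conjecture10} reduces with no further subtlety to the one-dimensional integral $\int_{0}^{m\sqrt{2}} \frac{2\pi}{\sqrt{t(t+1)}}\,dt$; the only point to note is integrability at the lower endpoint, where the integrand behaves like $2\pi/\sqrt{t}$.

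Next I would evaluate this integral by completing the square, $t(t+1) = \bigl(t + \tfrac12\bigr)^{2} - \tfrac14$, whence the antiderivative is $\ln\!\bigl(t + \tfrac12 + \sqrt{t(t+1)}\bigr)$. Evaluating between $t = 0$, where the argument equals $\tfrac12$, and $t = m\sqrt{2}$, then clearing the factor $\tfrac12$ out of the logarithm and using $m\sqrt{2}\,(m\sqrt{2} + 1) = m(2m + \sqrt{2})$ under the radical, I would arrive at
\begin{equation*}
\int_{0}^{m\sqrt{2}}\!\!\int_{-\pi}^{\pi} \frac{dx\,dt}{t + \sin^{2} x} = 2\pi \ln\!\Bigl(1 + 2m\sqrt{2} + 2\sqrt{m(2m + \sqrt{2})}\,\Bigr).
\end{equation*}
This already has the conjectured shape $2\pi\ln\bigl(1 + 2m\sqrt{2} + a_{m}\sqrt{b_{m}(2m + \sqrt{2})}\bigr)$, so all analytic content is finished and the remaining task is purely arithmetic: to write $2\sqrt{m} = a_{m}\sqrt{b_{m}}$.

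Finally I would perform the identification. Matching radicands forces $a_{m}^{2} b_{m} = 4m$, and imposing the convention used throughout the paper of factoring out the maximal square (so that $a_{m}$ is a positive integer and $b_{m}$ is squarefree) makes the splitting unique. Writing $m = s^{2}q$ with $q$ squarefree yields $a_{m} = 2s$ and $b_{m} = q$. Since $4$ is a perfect square, the squarefree part of $4m = a_{m}^{2} b_{m}$ coincides with the squarefree part of $m$, and because $a_{m}^{2}$ is a square while $b_{m}$ is squarefree it also equals $b_{m}$; hence $b_{m}$ is exactly the squarefree part of $m$, which is {\bf A007913}. A spot check confirms the bookkeeping: for $m = 8$ one has $q = 2$, $s = 2$, giving $a_{8}\sqrt{b_{8}(16 + \sqrt{2})} = 4\sqrt{2(16 + \sqrt{2})} = 2\sqrt{8(16 + \sqrt{2})}$, in agreement with the direct radicand $2\sqrt{m(2m+\sqrt{2})}$.

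The main obstacle is therefore not analytic but definitional. The decomposition $2\sqrt{m} = a_{m}\sqrt{b_{m}}$ is ambiguous until a convention is fixed, and the assertion ``$b_{m}$ corresponds to {\bf A007913}'' is correct precisely under the choice that $b_{m}$ be squarefree (equivalently, that the maximal integer square be extracted, exactly as in the earlier ``factoring the maximum integer'' passages). Once that convention is pinned down, the residual content is the elementary fact that the squarefree part is read off from the square/squarefree splitting of $4m$, so the stated conjecture is in fact a theorem. The only genuine care needed is to verify that the constant $2$ produced by the lower limit and the factor $2$ multiplying the radical combine exactly as written, and that $t > 0$ throughout $(0, m\sqrt{2}]$ so that both the inner evaluation and the logarithmic antiderivative remain valid.
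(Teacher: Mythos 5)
Your proposal is correct, and it does strictly more than the paper: Conjecture~\ref{conjecture10} is left entirely open in the text, supported only by the list of twenty evaluated integrals in the appendix and by Sloane's sequence--recognition tool, so there is no proof in the paper to compare yours against. Your route --- quoting the closed form $\int_{-\pi}^{\pi}\frac{dx}{t+\sin^{2}x}=\frac{2\pi}{\sqrt{t(t+1)}}$ from the $P(0,k)$ step of Lemma~\ref{lemma2}, reducing by Tonelli to $\int_{0}^{m\sqrt{2}}\frac{2\pi\,dt}{\sqrt{t(t+1)}}$ (with the correct remark that the $t^{-1/2}$ singularity at $0$ is integrable), and using the antiderivative $\ln\bigl(t+\tfrac{1}{2}+\sqrt{t(t+1)}\bigr)$ --- gives exactly
\begin{equation*}
2\pi\ln\Bigl(1+2m\sqrt{2}+2\sqrt{m\,(2m+\sqrt{2})}\Bigr),
\end{equation*}
which reproduces every entry in the appendix table, and the residual identification $a_{m}^{2}b_{m}=4m$ with $b_{m}$ squarefree correctly yields $b_{m}=\operatorname{core}(4m)=\operatorname{core}(m)$, i.e.\ \textbf{A007913}. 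You are also right to isolate the one genuine issue, which is definitional rather than analytic: the paper never fixes the normalization of the pair $(a_{m},b_{m})$, and the statement is true only under the convention that the maximal square is extracted so that $b_{m}$ is squarefree (the convention the appendix data visibly follows). With that convention made explicit, your argument upgrades the conjecture to a theorem, which is a strict improvement on what the paper offers.
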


  Note that the Smarandache sequence corresponding to $b_{m}$ is also referred to as core($m$) \cite{A007913}. 
We thus have related Fourier-type integrals to squarefree numbers (through  \cite{A007913}), given that the above double 
integrals may be represented as follows, 
by Fubini's theorem:

\begin{equation*}
\int_{0}^{m} (\int_{-\pi}^{\pi} \frac{\cos(nx)}{t+\sin^{2}x} dx) dt
\end{equation*}

\begin{equation*}
=\int_{-\pi}^{\pi} \cos(n x) ( -\ln (\sin^{2} x ) + \ln(m + \sin^{2} x   )) dx
\end{equation*}

  Conjecture~\ref{conjecture10} may easily be extended to all non-perfect-square integers, with the Smarandache sequence 
{\bf A007913} \cite{A007913} again arising.

  We presently leave the above conjectures as open problems. A closed form evaluation of 
$\int_{q}^{m} \int_{-\pi}^{\pi} \frac{\cos(nx)}{t+\sin^{2}x} dx dt $ would be ideal, especially considering
 the multifarious connections to number theory.

\section{Other Results from Fourier Analysis}

  Consider elementary $2 \pi$-periodic functions of the form $f: \mathbb{Q}^{2} \times \mathbb{R} \to \mathbb{C}$ given by
 $f(x) = \ln(\sin (ax) +\cos (bx))$, where $a$ and $b$ are rational. Letting $a=1$ and $b=1$
the evaluation of the sine and cosine Fourier coefficients of $f(x)$ results in expressions involving 
$_{2} F _{1} (1,-\frac{n}{2},1-\frac{n}{2},i)$, which is not defined for positive even integers $n$. We evaluate the aforementioned coefficients as 
follows:

\begin{proposition}\label{firstpr}

\begin{eqnarray*}
\forall n \in \mathbb{Z}^{+}   (\lefteqn{\int_{-\pi}^{\pi} \cos(nx) \ln(\sin x + \cos x) dx =  }    \\  
&   &  (-1)^ {\lfloor {\frac{n + 3}{4}} \rfloor}       \frac{(-1)^{n + 1} + 1}{\sqrt{2}}    \frac{1}{2 \lfloor {\frac{n}{2}} \rfloor + 1}   i   \pi   
+ \lfloor {\frac{(n+3)\bmod{4}}{3}} \rfloor (-1)^{\lfloor \frac{n}{4} \rfloor}   \frac{2\pi}{n}  )
\end{eqnarray*}

\end{proposition}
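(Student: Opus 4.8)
The plan is to reduce the coefficient to a classical logarithmic Fourier integral by exploiting the identity $\sin x+\cos x=\sqrt{2}\,\cos\!\left(x-\tfrac{\pi}{4}\right)$. First I would substitute $\phi=x-\tfrac{\pi}{4}$ and use the fact that the principal-branch integrand is $2\pi$-periodic in $\phi$ to move the range back to $[-\pi,\pi]$. Writing $\ln(\sin x+\cos x)=\tfrac12\ln 2+\operatorname{Log}(\cos\phi)$ and expanding $\cos\!\left(n(\phi+\tfrac{\pi}{4})\right)=\cos\!\left(\tfrac{n\pi}{4}\right)\cos(n\phi)-\sin\!\left(\tfrac{n\pi}{4}\right)\sin(n\phi)$, the constant $\tfrac12\ln 2$ contributes nothing for $n\ge 1$, so the coefficient collapses to $\cos\!\left(\tfrac{n\pi}{4}\right)A_n-\sin\!\left(\tfrac{n\pi}{4}\right)B_n$, where $A_n=\int_{-\pi}^{\pi}\cos(n\phi)\operatorname{Log}(\cos\phi)\,d\phi$ and $B_n=\int_{-\pi}^{\pi}\sin(n\phi)\operatorname{Log}(\cos\phi)\,d\phi$.

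Next I would record the branch explicitly as $\operatorname{Log}(\cos\phi)=\ln|\cos\phi|+i\pi\,\chi(\phi)$, where $\chi$ is the indicator of $\{\phi:\cos\phi<0\}$ on $[-\pi,\pi]$. Since both $\ln|\cos\phi|$ and $\chi$ are even, $\operatorname{Log}(\cos\phi)$ is even, so $B_n=0$ at once and only the $\cos\!\left(\tfrac{n\pi}{4}\right)A_n$ piece survives. The real part of $A_n$ I would obtain by orthogonality against the classical expansion $\ln|\cos\phi|=-\ln 2+\sum_{k\ge 1}\tfrac{(-1)^{k+1}}{k}\cos(2k\phi)$: the only surviving harmonic is $n=2k$, giving $\tfrac{2\pi(-1)^{n/2+1}}{n}$ for even $n$ and $0$ for odd $n$. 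The imaginary part is $i\pi\int_{-\pi}^{\pi}\cos(n\phi)\chi(\phi)\,d\phi=-\,i\pi\,\tfrac{2\sin(n\pi/2)}{n}$, which is $0$ for even $n$ and $-i\pi\,\tfrac{2(-1)^{(n-1)/2}}{n}$ for odd $n$. Thus $A_n$ is real for even $n$ and purely imaginary for odd $n$, which already explains the two-term structure of the claim: the factor $(-1)^{n+1}+1$ selects the odd (imaginary) case, while $\lfloor\tfrac{(n+3)\bmod 4}{3}\rfloor$ selects $n\equiv 0\pmod 4$ for the real case.

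Finally I would assemble $\cos\!\left(\tfrac{n\pi}{4}\right)A_n$ and translate into the floor/mod notation by reducing $n$ modulo $8$. The prefactor $\cos\!\left(\tfrac{n\pi}{4}\right)$ vanishes precisely when $n\equiv 2\pmod 4$, which kills the coefficient there; for odd $n$ it equals $\pm\tfrac{1}{\sqrt2}$ and multiplies the imaginary part, and for $n\equiv 0\pmod 4$ it equals $(-1)^{n/4}$ and multiplies the real part. In the odd case I would verify residue by residue that $-\cos\!\left(\tfrac{n\pi}{4}\right)(-1)^{(n-1)/2}\,\tfrac{\sqrt2\,i\pi}{n}$ collapses to $(-1)^{\lfloor(n+3)/4\rfloor}\tfrac{\sqrt2\,i\pi}{n}$, using $2\lfloor n/2\rfloor+1=n$; a short check at $n=1,3,5,7,9,11$ confirms the sign pattern $(-1)^{\lfloor(n+3)/4\rfloor}$ matches the first summand exactly.

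I expect the genuine obstacle to lie in the real case $n\equiv 0\pmod 4$ rather than in the odd case. There the computation yields $\cos\!\left(\tfrac{n\pi}{4}\right)\tfrac{2\pi(-1)^{n/2+1}}{n}=-(-1)^{n/4}\tfrac{2\pi}{n}$ (since $n/2$ is even forces $(-1)^{n/2+1}=-1$), whereas the stated summand reads $(-1)^{\lfloor n/4\rfloor}\tfrac{2\pi}{n}$. Reconciling this apparent overall sign — tracing whether it originates from the choice of argument ($+i\pi$ versus $-i\pi$) for the complex logarithm on the negative reals, from the orientation of the period shift, or is simply a sign typo to be absorbed — is the delicate point of the proof; everything else is orthogonality and a finite case analysis modulo $8$.
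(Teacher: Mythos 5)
Your strategy is sound, and it is worth noting at the outset that the paper offers no proof of Proposition~1 at all --- the statement is simply asserted after a remark about ${}_2F_1$ degeneracies --- so your reduction via $\sin x+\cos x=\sqrt{2}\cos\left(x-\tfrac{\pi}{4}\right)$, the parity argument killing $B_n$, the classical expansion $\ln|\cos\phi|=-\ln 2+\sum_{k\ge1}\tfrac{(-1)^{k+1}}{k}\cos(2k\phi)$, and the indicator computation for the imaginary part together constitute the only actual proof in sight. Your odd-$n$ case checks out exactly against the stated first summand (with the principal branch $\arg(-r)=+\pi$), and the vanishing at $n\equiv 2\pmod 4$ is forced by $\cos\left(\tfrac{n\pi}{4}\right)=0$ on your side and by the two floor/parity filters on the paper's side.

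On the one point you flag as delicate: your computation is right and the proposition as printed is wrong for $n\equiv 0\pmod 4$. You should not leave open the possibility that the discrepancy ``originates from the choice of argument $+i\pi$ versus $-i\pi$'': the offending term is the \emph{real} part of the integral, $\int_{-\pi}^{\pi}\cos(nx)\ln|\sin x+\cos x|\,dx$, and $\operatorname{Re}\operatorname{Log}z=\ln|z|$ on every branch, so no branch convention can flip it. Nor can the orientation of the shift, since $\sqrt2\sin\left(x+\tfrac{\pi}{4}\right)$ gives the same value. Concretely, at $n=4$ one gets $-\int_{-\pi}^{\pi}\cos(4u)\ln|\cos u|\,du=-\left(-\tfrac{\pi}{2}\right)=+\tfrac{\pi}{2}$, whereas the proposition's second summand gives $(-1)^{\lfloor 4/4\rfloor}\tfrac{2\pi}{4}=-\tfrac{\pi}{2}$; the correct coefficient is $(-1)^{\lfloor n/4\rfloor+1}\tfrac{2\pi}{n}$. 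This is corroborated by the paper's own specialization at $x=\pi$: since $\operatorname{Re}\ln(-1)=0$, the constant term $-\tfrac{\ln 2}{2}$ must be cancelled by $\sum_{4\mid n}\tfrac{2(-1)^{n/4+1}}{n}=+\tfrac{\ln 2}{2}$, which holds with the corrected sign and fails with the printed one (even though the displayed series identity $-\tfrac{\ln 2}{4}=\sum_n\lfloor\tfrac{(n+3)\bmod 4}{3}\rfloor\tfrac{(-1)^{\lfloor n/4\rfloor}}{n}$ happens to be true on its own). So finish the modulo-$8$ bookkeeping as you describe, and state the $n\equiv 0\pmod 4$ term with the extra minus sign.
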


\begin{proposition}\label{secondpr}

\begin{equation*}
\forall n \in \mathbb{Z}^{+}   (  \int_{-\pi}^{\pi} \sin(nx) \ln(\sin x + \cos x) dx 
 =   \frac   {     (-1)^ {\lfloor \frac{n + 5}  {4} \rfloor }  ((-1)^{n+3}+1) \pi i}{\sqrt{2} n} 
 - \frac{\lfloor  \frac{(n+5)\bmod{4} }{3}  \rfloor   (-1)^{\lfloor  \frac{n+2}{4}  \rfloor} \pi}{2 \lfloor \frac{n}{4} \rfloor +1})
\end{equation*}

\end{proposition}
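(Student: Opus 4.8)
The plan is to split the complex-valued integrand into its real and imaginary parts and treat each by a separate elementary device, exactly as one would for the companion cosine statement in Proposition~\ref{firstpr}. First I would rewrite the argument of the logarithm as $\sin x + \cos x = \sqrt{2}\,\cos(x - \tfrac{\pi}{4})$, so that on $(-\pi,\pi)$ the quantity $\sin x + \cos x$ is a negative real precisely on $(-\pi,-\tfrac{\pi}{4}) \cup (\tfrac{3\pi}{4},\pi)$ and positive on $(-\tfrac{\pi}{4},\tfrac{3\pi}{4})$. Using the principal branch, this yields the pointwise decomposition $\ln(\sin x + \cos x) = \tfrac12\ln 2 + \ln|\cos(x-\tfrac{\pi}{4})| + i\pi\,\mathbf{1}[\sin x + \cos x < 0]$, separating the problem into a real Fourier-coefficient computation and a purely imaginary piece coming from the sign change; the latter is the source of the $i\pi$ terms in the statement.

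For the imaginary part I would compute $i\pi \int \sin(nx)\,dx$ over the two subintervals directly. A short calculation collapses the four boundary contributions to $-\tfrac{i\pi}{n}\big(\cos\tfrac{n\pi}{4} - \cos\tfrac{3n\pi}{4}\big)$, and the identity $\cos\tfrac{3n\pi}{4} = (-1)^n\cos\tfrac{n\pi}{4}$ shows this equals $-\tfrac{i\pi}{n}\cos\tfrac{n\pi}{4}\,(1-(-1)^n)$. Hence the imaginary contribution vanishes for even $n$ and equals $-\tfrac{2i\pi}{n}\cos\tfrac{n\pi}{4}$ for odd $n$; since $\cos\tfrac{n\pi}{4} = \pm\tfrac{1}{\sqrt2}$ with the sign running through the period-$8$ block $+,-,-,+$, I expect this to match the factor $(-1)^{\lfloor (n+5)/4\rfloor}((-1)^{n+3}+1)/\sqrt2$ appearing in the claimed formula.

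For the real part the $\tfrac12\ln2$ term integrates to zero against $\sin(nx)$, so it remains to evaluate $\int_{-\pi}^{\pi}\sin(nx)\ln|\cos(x-\tfrac{\pi}{4})|\,dx$. Here I would invoke the classical expansion $\ln|2\cos\theta| = \sum_{k\ge1}\tfrac{(-1)^{k-1}}{k}\cos(2k\theta)$ with $\theta = x - \tfrac{\pi}{4}$, expand $\cos(2k(x-\tfrac{\pi}{4})) = \cos\tfrac{k\pi}{2}\cos(2kx) + \sin\tfrac{k\pi}{2}\sin(2kx)$, and integrate term by term (justified by $L^2$-convergence and orthogonality on $(-\pi,\pi)$). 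Only the $n = 2k$ term survives, so the real part vanishes for odd $n$ and equals $\tfrac{2(-1)^{n/2-1}}{n}\pi\sin\tfrac{n\pi}{4}$ for even $n$; because $\sin\tfrac{n\pi}{4}=0$ when $n\equiv0\pmod4$, the real contribution is supported only on $n\equiv2\pmod4$, which should reproduce the indicator $\lfloor\tfrac{(n+5)\bmod4}{3}\rfloor$.

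The genuinely delicate step is not any single integral but the final bookkeeping: verifying that the two piecewise answers, naturally indexed by $n$ modulo $8$ (imaginary part) and modulo $4$ (real part), coincide with the compact floor-and-mod expressions in the statement. I would settle this by checking the residue classes one at a time, confirming that $\lfloor(n+5)/4\rfloor$, $\lfloor(n+2)/4\rfloor$, and $2\lfloor n/4\rfloor+1$ reproduce the correct signs and the denominators $n$ and $n/2$ in each class. This is routine but error-prone, and is where I expect the main obstacle to lie.
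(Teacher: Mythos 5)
Your strategy is correct, and it is worth noting at the outset that the paper itself states Proposition~\ref{secondpr} without any proof (both propositions in Section 3 are asserted and then used to assemble the Fourier series of $\ln(\sin x+\cos x)$), so your argument supplies something the paper does not. The decomposition $\ln(\sin x+\cos x)=\tfrac12\ln 2+\ln\lvert\cos(x-\tfrac{\pi}{4})\rvert+i\pi\,\mathbf{1}[\sin x+\cos x<0]$ is the right move, and both halves check out against the stated formula. For the imaginary part, your expression $-\tfrac{i\pi}{n}\cos\tfrac{n\pi}{4}\,(1-(-1)^n)$ does reproduce the factor $(-1)^{\lfloor (n+5)/4\rfloor}((-1)^{n+3}+1)/(\sqrt2\,n)$: e.g.\ $n=1,3,5,7$ give $\cos\tfrac{n\pi}{4}=\tfrac{1}{\sqrt2},-\tfrac{1}{\sqrt2},-\tfrac{1}{\sqrt2},\tfrac{1}{\sqrt2}$ while $(-1)^{\lfloor(n+5)/4\rfloor}=-1,+1,+1,-1$, matching sign for sign. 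For the real part, only $k=n/2$ survives the orthogonality, giving $\tfrac{(-1)^{n/2-1}}{n/2}\pi\sin\tfrac{n\pi}{4}$, which is $\pi,-\tfrac{\pi}{3},\tfrac{\pi}{5},-\tfrac{\pi}{7},\dots$ on $n=2,6,10,14,\dots$ and $0$ otherwise; this agrees with $-\lfloor\tfrac{(n+5)\bmod 4}{3}\rfloor(-1)^{\lfloor(n+2)/4\rfloor}\pi/(2\lfloor n/4\rfloor+1)$, since $2\lfloor n/4\rfloor+1=n/2$ exactly when $n\equiv 2\pmod 4$. The term-by-term integration is justified as you say ($1/k$ is square-summable, so the series for $\ln\lvert 2\cos\theta\rvert$ converges in $L^2$).

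Two small points. First, the value of the integral genuinely depends on the branch of the logarithm on the set where $\sin x+\cos x<0$; your principal-branch convention ($+i\pi$ on the negative reals) is the one consistent with the stated formula, although the author elsewhere (end of the proof of Lemma~\ref{lemma2}) writes $\ln(-1)=-\pi i$, so you should state the convention explicitly. Second, the ``delicate bookkeeping'' you defer is in fact only a finite check over residue classes of $n$ modulo $8$ (imaginary part) and modulo $4$ (real part), and it does close; writing out that table would complete the proof.
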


   Note that the former series may be expressed using the Kronecker symbol and related modular forms \cite{A091337},
and that $\lfloor {\frac{(n+3)\bmod{4}}{3}} \rfloor (-1)^{\lfloor \frac{n}{4} \rfloor}$ corresponds to the inverse of the
8th cyclotomic polynomial \cite{A014017}.

  Given Proposition~\ref{firstpr} and Proposition~\ref{secondpr}, determine the Fourier series for $\ln(\sin x +\cos x)$:

\begin{eqnarray*}
\lefteqn{\ln(\sin x + \cos x) = \frac{i (\pi + i \ln2)}{2}  }      \\
&   &    + \sum_{n=1}^{\infty} \cos(nx) ((-1)^ {\lfloor {\frac{n + 3}{4}} \rfloor}  \frac{(-1)^{n + 1} + 1}{\sqrt{2}} 
   \frac{1}{2 \lfloor {\frac{n}{2}} \rfloor + 1}  i  + \lfloor {\frac{(n+3)\bmod{4}}{3}} \rfloor (-1)^{\lfloor \frac{n}{4} \rfloor}   \frac{2}{n}  )    \\
&   &    +\sin(nx) (\frac{(-1)^ {\lfloor \frac{n + 5}  {4} \rfloor }  ((-1)^{n+3}+1)  i}{\sqrt{2} n}            
- \frac{\lfloor  \frac{(n+5)\bmod{4} }{3}  \rfloor   (-1)^{\lfloor  \frac{n+2}{4}  \rfloor} }{2 \lfloor \frac{n}{4} \rfloor +1}))
\end{eqnarray*}
  
  Letting $x = \pi$, use the above technique to prove:

\begin{equation*}
-\frac{\pi} {\sqrt{2} } =  \sum_{n=1}^{\infty}   \frac{      (-1)^{\lfloor {  \frac{n+3}  {4}  } \rfloor}   ( (-1)^ {n+1} +1)    }       {2 \lfloor   \frac{n}{2} 
   \rfloor +1}
\end{equation*}

\begin{equation*}
-\frac{\ln 2}  {4}  = \sum_{n=1}^{\infty}  \frac{ \lfloor \frac{ (n+3)\bmod{4} }  {3} \rfloor    (-1)^{\lfloor \frac{n}{4} \rfloor}} {n}
\end{equation*}

  To conclude, in addition to presenting the above two series, we have rigorously proven the new evaluation of $\int_{-\pi}^{\pi} \frac{\cos (nx)}{k+\sin^{2}x} dx $, 
and have presented unusual connections between Fourier-type double integrals and Smarandache number theory.

\section{Appendix}

\emph{Mathematica} function for Theorem~\ref{theorem2}:

\vspace{.125in}

{\tt  Theorem2 =    \\
 Simplify[FunctionExpand[(-(((2\#1 + 1 + 2 Sqrt[\#1 (\#1 + 1)])\verb|^| \\
             Floor[\#2/2] - (2 \#1 + 1 - 2 Sqrt[\#1 (\#1 + 1)])\verb|^| \\
             Floor[\#2/2])/Sqrt[\#1 (\#1 + 1)]) + (Cosh[ \\
          2 Floor[\#2/2] ArcSinh[Sqrt[\#1]]]*(2/Sqrt[\#1 (\#1 + 1)])))* \\
     Pi*((((-1)\verb|^|\#2) + 1)/(2))]] \&}

\vspace{.125in}
  
  Integration results for Conjecture~\ref{conjecture8}:

\vspace{.125in}

\begin{center}

$ -\pi (\ln(4) -     2 \ln(7 + 3 \sqrt{5}))$,

$ -\pi (\ln(25) -     2 \ln(17 + 2 \sqrt{66}))$,

$ -\pi (\ln(9) -     2 \ln(10 + \sqrt{91}))$,

$ -\pi (\ln(49) -    2 \ln(23 + 4 \sqrt{30}))$,

$ -\pi (\ln(16) -    2 \ln(13 + 3 \sqrt{17}))$,

$ -\pi (\ln(81) -    2 \ln(29 + 2 \sqrt{190}))$,

$ -\pi (\ln(25) -    2 \ln(16 + \sqrt{231}))$,

$ -\pi (\ln(121) -     2 \ln(35 + 4 \sqrt{69}))$,

$ -\pi (\ln(36) -     2 \ln(19 + 5 \sqrt{13}))$,

$ -\pi (\ln(169) -     2 \ln(41 + 6 \sqrt{42}))$,

$ -\pi (\ln(49) -     2 \ln(22 + \sqrt{435}))$,

$ -\pi (\ln(225) -     2 \ln(47 + 8 \sqrt{31}))$,

$ -\pi (\ln(64) -     2 \ln(25 + \sqrt{561}))$,

$ -\pi (\ln(289) -     2 \ln(53 + 6 \sqrt{70}))$,

$ -\pi (\ln(81) -     2 \ln(28 + \sqrt{703}))$,

$ -\pi (\ln(361) -     2 \ln(59 + 4 \sqrt{195}))$,

$ -\pi (\ln(100) -     2 \ln(31 + \sqrt{861})) $, ...

\end{center}

\vspace{.125in}

  As indicated above, the sequence 4, 25, 9, 49, 16, 81, 25, 121, 36, 169, 49, 225, 64, 289, 81, 361, 100, ... 
corresponds to {\bf A168077} \cite{A168077}, and the sequence 
7, 17, 10, 23, 13, 29, 16, 35, 19, 41, 22, 47, 25, 53, 28, 59, 31, ... corresponds to {\bf A165367} \cite{A165367}.

\vspace{.125in}
  
  Integration results for Conjecture~\ref{conjecture9}:

\vspace{.125in}

\begin{center}

$4 (-12 + 5 \sqrt{6}) \pi$, 

$2 (-15 + 4 \sqrt{15}) \pi $, 

$\frac{8}{9} (-28 + 11 \sqrt{7}) \pi$, 

$\frac{3}{2} (-15 + 7 \sqrt{5}) \pi $, 

$\frac{4}{25} (-132 + 17 \sqrt{66}) \pi$, 

$\frac{2}{9} (-91 + 10 \sqrt{91}) \pi $, 

$\frac{8}{49} (-120 + 23 \sqrt{30}) \pi$, 

$\frac{3}{8} (-51 + 13 \sqrt{17}) \pi $, 

$\frac{4}{81} (-380 + 29 \sqrt{190}) \pi$, 

$\frac{2}{25} (-231 + 16 \sqrt{231}) \pi $, 

$\frac{8}{121} (-276 + 35 \sqrt{69}) \pi$, 

$\frac{5}{18} (-65 + 19 \sqrt{13}) \pi $, 

$\frac{12}{169} (-252 + 41 \sqrt{42}) \pi$, 

$\frac{2}{49} (-435 + 22 \sqrt{435}) \pi$, 

$\frac{16}{225} (-248 + 47 \sqrt{31}) \pi $, 

$\frac{1}{32} (-561 + 25 \sqrt{561}) \pi$, 

$\frac{12}{289} (-420 + 53 \sqrt{70}) \pi$, 

$\frac{2}{81} (-703 + 28 \sqrt{703}) \pi $, 

$\frac{8}{361} (-780 + 59 \sqrt{195}) \pi $, 

$\frac{1}{50} (-861 + 31 \sqrt{861}) \pi$, ...

\end{center}

\vspace{.125in}

As indicated above the sequence 
5, 4, 11, 7, 17, 10, 23, 13, 29, 16, 35, 19, 41, 22, 47, 25, 53, 28, 59, 31
 corresponds to {\bf A165367} \cite{A165367}.

\vspace{.125in}

  Integration results for Conjecture~\ref{conjecture10}:

\vspace{.125in}

\begin{center}

$2 \pi \ln(1 + 2 \sqrt{2} + 2 \sqrt{2 + \sqrt{2}})$, 

$2 \pi \ln(1 + 4 \sqrt{2} + 2 \sqrt{2 (4 + \sqrt{2})})$, 

$2 \pi \ln(1 + 6 \sqrt{2} + 2 \sqrt{3 (6 + \sqrt{2})})$, 

$2 \pi \ln(1 + 8 \sqrt{2} + 4 \sqrt{8 + \sqrt{2}})$, 

$2 \pi \ln(1 + 10 \sqrt{2} + 2 \sqrt{5 (10 + \sqrt{2})})$, 

$2 \pi \ln(1 + 12 \sqrt{2} + 2 \sqrt{6 (12 + \sqrt{2})})$, 

$2 \pi \ln(1 + 14 \sqrt{2} + 2 \sqrt{7 (14 + \sqrt{2})})$, 

$2 \pi \ln(1 + 16 \sqrt{2} + 4 \sqrt{2 (16 + \sqrt{2})}) $, 

$2 \pi \ln[1 + 18 \sqrt{2} + 6 \sqrt{18 + \sqrt{2})}) $, 

$2 \pi \ln[1 + 20 \sqrt{2} + 2 \sqrt{10 (20 + \sqrt{2})}) $, 

$2 \pi \ln[1 + 22 \sqrt{2} + 2 \sqrt{11 (22 + \sqrt{2})}) $, 

$2 \pi \ln[1 + 24 \sqrt{2} + 4 \sqrt{3 (24 + \sqrt{2})}) $, 

$2 \pi \ln[1 + 26 \sqrt{2} + 2 \sqrt{13 (26 + \sqrt{2})}) $, 

$2 \pi \ln[1 + 28 \sqrt{2} + 2 \sqrt{14 (28 + \sqrt{2})}) $, 

$2 \pi \ln[1 + 30 \sqrt{2} + 2 \sqrt{15 (30 + \sqrt{2})}) $, 

$2 \pi \ln[1 + 32 \sqrt{2} + 8 \sqrt{32 + \sqrt{2})}) $, 

$2 \pi \ln[1 + 34 \sqrt{2} + 2 \sqrt{17 (34 + \sqrt{2})}) $, 

$2 \pi \ln[1 + 36 \sqrt{2} + 6 \sqrt{2 (36 + \sqrt{2})}) $, 

$2 \pi \ln[1 + 38 \sqrt{2} + 2 \sqrt{19 (38 + \sqrt{2})}) $, 

$2 \pi \ln[1 + 40 \sqrt{2} + 4 \sqrt{5 (40 + \sqrt{2})}) $, ...

\end{center}

\vspace{.125in}

  As indicated above, the sequence
1, 2, 3, 1, 5, 6, 7, 2, 1, 10, 11, 3, 13, 14, 15, 1, 17, 2, 19, 5, ...     
 corresponds to the Smarandache sequence {\bf A007913} \cite{A007913}.

\vspace{.125in}

\end{document}